\newcommand*{\tikzS}{3}%
\newcommand*{\tikzSS}{6}%
\newcommand*{\tikzSSS}{4.2}%
\newlength{\figtriml}
\newlength{\figtrimb}
\newlength{\figtrimr}
\newlength{\figtrimt}
\newlength{\figwidth}
\newcommand{\white}{\color{white}}
\newcommand{\eabullet}{\hfill$\bullet$}
\newcommand{\bb}{\mathbf{b}}
\newcommand{\bias}{\mathfrak{f}}
\newcommand{\xbias}{x_\bias}
\newcommand{\ybias}{y_\bias}
\DeclareMathOperator*{\diag}{Diag}
\newcommand{\vecg}{\succ}
\newcommand{\vecl}{\prec}
\newcommand{\vecleq}{\preccurlyeq}
\newcommand{\1}{\mathbf{1}}
\newcommand{\0}{\mathbf{0}}
\newcommand{\IDE}{\mathbb{I}}
\newcommand{\R}{\mathbb{R}}
\newcommand{\N}{\mathbb{N}}
\newcommand{\Rp}{\mathbb{R}_+}
\newcommand{\NN}{\mathcal{N}}
\newcommand{\GG}{\mathcal{G}}
\newcommand{\MM}{\mathcal{M}}
\newcommand{\CC}{\mathcal{C}}
\newcommand{\DD}{\mathcal{D}}
\newcommand{\rr}{\mathbf{r}}
\renewcommand{\ss}{\mathbf{s}}
\newcommand{\yy}{\mathbf{y}}
\newcommand{\xx}{\mathbf{x}}
\newcommand{\Qij}{Q_{ij}}
\newcommand{\qi}{q_i}
\newcommand{\qq}{\mathbf{q}}
\newcommand{\aalpha}{\boldsymbol{\alpha}}
\newcommand{\bbeta}{\boldsymbol{\beta}}
\newcommand{\eeta}{\boldsymbol{\eta}}
\newcommand{\oomega}{\boldsymbol{\omega}}
\newcommand{\Rl}{{R^\ell}}
\newcommand{\yyRl}{\yy_{\Rl}}
\newcommand{\LRl}{L_{\Rl,\Rl}}
\newcommand{\vv}{\mathbf{v}}
\newcommand{\ones}{\mathbf{1}}
\newcommand{\zeros}{\mathbf{0}}
\newcommand{\hh}{\mathbf{h}}
\newcommand{\cc}{\mathbf{c}}
\newcommand{\ww}{\mathbf{w}}
\newcommand{\Wil}{W^{i\to \ell}}
\newcommand{\Hil}{H^{i\to \ell}}
\newcommand{\Wij}{W^{i\to j}}
\newcommand{\Hij}{H^{i\to j}}
\newcommand{\Wki}{W^{k\to i}}
\newcommand{\Hki}{H^{k\to i}}
\newtheorem{theorem}{Theorem}
\newtheorem{lemma}[theorem]{Lemma}
\newtheorem{remark}{Remark}
\newtheorem{assumption}{Assumption}
 \title{The harmonic influence in social networks and its distributed computation by message passing}
\author{Wilbert Samuel Rossi and Paolo Frasca
\thanks{W. S. Rossi  is with the Department of Applied Mathematics,
        University of Twente, 7500 AE Enschede, The Netherlands
        {\tt\small w.s.rossi@utwente.nl}; P. Frasca is with Univ. Grenoble Alpes, CNRS, Inria, GIPSA-lab, F-38000 Grenoble, France
        {\tt\small paolo.frasca@gipsa-lab.fr}}%
}
\begin{document}
\maketitle
\thispagestyle{empty}
\pagestyle{empty}

\begin{abstract} In this paper we elaborate upon a measure of node influence in social networks, which was recently proposed by Vassio et al., IEEE Trans. Control Netw. Syst., 2014. This measure quantifies the ability of the node to sway the average opinion of the network.  
Following the approach by Vassio et al., we describe and study a distributed message passing algorithm that aims to compute the nodes' influence. The algorithm is inspired by an analogy between potentials in electrical networks and opinions in social networks. If the graph is a tree, then the algorithm computes the nodes' influence in a number of steps equal to the diameter of the graph. On general graphs, the algorithm converges asymptotically to a meaningful approximation of the nodes' influence. In this paper we detail the proof of convergence, which greatly extends previous results in the literature, and we provide simulations that illustrate the usefulness of the returned approximation.
\end{abstract}



\section{Introduction} 

In the study of networks and dynamical processes therein, one important issue is  the identification of the most influential nodes, i.e. those with the higher ability to drive the others towards a desired state. 
The issue depends on the process and the control objective: consequently, it has been addressed in several contexts, from the seminal paper~\cite{kempe2003maximizing} on maximizing the spreading of influence, to several leader selection problems recently considered, such as~\cite{easley2010networks,Estrada20103648,clark2014minimizing,Yildiz:2013,Vassio:2014:journal,linfarjovTAC14leaderselection,7277027}. 

In this work, we assume that a leader has to compete against an external field of influence in order to win the opinions of the other individuals in a social network. 
Following a consolidated research line~\cite{NEF-ECJ:99,como:opinion,parsegov2015new}, we shall postulate that the opinions of the nodes follow a linear dynamics with fixed confidence weights.

More precisely, the leader node has a fixed opinion, whereas the remaining ``regular'' agents update their opinions, in synchronous rounds, to a weighted average between the opinions of their neighbors and the external field. The opinions converge asymptotically to values that depend on the interaction strength, on the opinion field, on the leader location and opinion, but not on the initial opinion of the regular agents.
Assuming for simplicity that the leader has opinion one and the external field has opinion zero, we define the \textit{influence} of the leader as the sum of the asymptotic opinions reached by the agents in the social network. The influence of a node is the influence obtained if that node was the leader. As we will discuss in details later, this definition is very close to the definition of \textit{Harmonic Influence Centrality} introduced in~\cite{Vassio:2014:journal} and implicitly in~\cite{Yildiz:2013}.

If the interaction strengths satisfy a ``reciprocity'' assumption (essentially, the reversibility of the update matrix), then the asymptotic opinions of the non-leader agents can be computed as the \textit{harmonic} electrical potential in an electrical network obtained from the social graph.
Consequently, the harmonic influences of the nodes can be computed exactly by solving an array of $N$ linear systems defined by the Laplacian of the graph, ``grounded'' in each of the $N$ nodes~\cite{6859421:sundaram}. This straightforward approach proposed already in~\cite{Yildiz:2013} has some drawbacks. 
Firstly, global knowledge of the graph and update matrix is required by most solution methods, with the exception of some distributed (i.e. non-global) methods like~\cite{SSWBD:2008} and~\cite{7063919}.
Secondly, solving $N$ systems is computationally expensive, even if one can resort to state-of-the-art algorithms that are tailored to Laplacian systems: these methods can solve each system in a time proportional to the number of edges but are not distributed~\cite{nkV:2012}.
Moreover, since the $N$ systems are obtained by grounding the same original Laplacian, solving them separately seems to produce wasteful redundancies in the computations.

Along the lines of~\cite{Vassio:2014:journal}, in this paper we take an approach that overcomes all these issues by studying a  \textit{Message Passing Algorithm} (MPA) able to concurrently compute the influence of all nodes. 
Our algorithm is {\em distributed}, that is, does not require any global knowledge of the graph or of the parameters of the opinion dynamics: moreover, it computes the harmonic influence of all nodes at the same time.

If the graph is a tree, then the algorithm computes the nodes' influence in a number of steps equal to the diameter of the graph. On general graphs, the algorithm converges asymptotically. The main result of this paper is indeed this proof of convergence, which subsumes the available results for regular graphs~\cite{Vassio:2014:journal} and for unicyclic graphs~\cite{RF:2016:ecc}. It must be stressed that in general the algorithm, even though it converges, does not converge to the exact values of the influence. In order to explore the magnitude of this error, we conclude the paper by simulating the MPA on synthetic graphs. We observe that when the number of cycles increases, the algorithm becomes slower and less accurate, but nevertheless provides a useful approximation of the harmonic influence.

\subsubsection*{Relations with the message passing literature} 
Our paper contributes to the literature on message passing algorithms, by providing an interesting example of algorithm that converges on {\em any} graph~\cite{mezard2009information}. On the contrary, proofs of convergence of message passing algorithms are often limited to tree graphs or to locally-tree-like graphs.

In this field, a closely related paper is~\cite{SSWBD:2008}, which reformulates the problem of solving a linear systems $A\xx = \bb$, where the matrix $A$ is full rank and symmetric, in a probabilistic inference problem. Then, it develops a gaussian belief propagation method, that involves two kinds of messages. The authors prove that if $A$ is strictly diagonally dominant, or if the matrix $|\IDE - A|$ (where $\IDE$ is the identity matrix) has spectral radius strictly smaller than one, then the algorithm converges to the exact solution. On trees, the algorithm coincides with the direct gaussian elimination method.

Our work also shares some ideas with~\cite{MVR:2006}, which proposes a \textit{consensus propagation} protocol based on two kinds of messages to solve the consensus problem: one contains a partial estimate of the consensus value and the other contains the number of nodes involved in such partial estimate. A suitable attenuation parameter makes the protocol~\cite{MVR:2006} convergent on general graphs.

\subsubsection*{Paper Structure}
Section~\ref{sect:opi_model} formulates the underlying opinion dynamic model with our standing the assumptions. Section~\ref{sec:elec} describes the electrical interpretation and discusses the equivalence with the problem formulated by~\cite{Yildiz:2013,Vassio:2014:journal,Vassio:2014:ECC}.
Section~\ref{sec:mpa} describes our Message Passing Algorithm for the concurrent and distributed computation of the harmonic influence, whereas the technical proof of convergence is given in Section~\ref{sect:convergence}. 
After some simulations presented in Section~\ref{sec:simulations}, Section~\ref{sec:conclu} concludes the paper.

\subsubsection*{Notation} 
The set of real and non-negative real numbers are denoted by $\R$ and $\Rp$, respectively.
Vectors are denoted with boldface letters and matrices with capital letters. 
The all-zero and all-one vectors are denoted by $\0$ and $\1$, respectively.
The symbol $\IDE$ denotes any identity matrix with appropriate dimension.
The symbol $\vecleq$ denotes entry-wise $\leq$ for vectors and matrices.  
The symbol $\vecl$ is used if the entry-wise inequality is strict for at least one entry. 
Given a matrix $Q$, $Q^\top$ denotes its transpose, $Q^{-1}$ its inverse,  $Q^\dagger$ its pseudo-inverse and $\rho(Q)$ its spectral radius, i.e. the maximum absolute value of the eigenvalues of $Q$. If $\rho(Q) <1$, $Q$ is termed ``Schur stable''. 
Given a vector $\vv$, $\diag(\vv)$ is the square diagonal matrix with the entries of $\vv$ on the main diagonal.
The cardinality of the set $S$ is denoted by $|S|$. The symbol $\subset$ is used for strict subsets; $\subseteq$ for generic subsets.
Given the matrix $Q \in \R^{S \times S}$ and the subsets $T,T' \subseteq S$, $Q_{T,T'}$ denotes the sub-matrix of $Q$ containing the rows and columns corresponding to $T$ and $T'$, respectively.
A non-negative matrix  $Q \in \Rp^{S \times S}$ is said to be stochastic, sub-stochastic and strictly sub-stochastic if $Q \1 = \1$,         $Q \1 \vecleq \1$ and $Q \1 \vecl \1$, respectively.

Let $\GG = (V,E)$ be a graph where $V$ is the set of vertices and $E$ is the set of edges, that are unordered pairs of vertices. We will use the terms \textit{node}, \textit{vertex} and \textit{agent} interchangably. 
The set $N_v = \{w \in V: \{v,w\}\in E \}$ contains the neighbors of $v$ in $\GG$; the degree of $v$ is $d_v = |N_v|$. A \textit{leaf} is a node of degree one.
We say that $\GG' = (V',E')$ is a subgraph of $\GG = (V,E)$ if $V' \subset V$ and $E' \subset E$. If $\GG'$ contains all edges of $\GG$ that join two vertices in $V'$, then $\GG'$ is said to be the subgraph \textit{induced by} $V'$ and is denoted by $\GG[V']$. 
A \textit{path} joining the nodes $v$ and $w$ is an ordered list of nodes $(u_0, u_1, \ldots, u_l)$ such that: 
\begin{enumerate}
\item[(i)] $u_0 = v$ and $u_l = w$;
\item[(ii)] $\{u_{i-1},u_{i}\} \in E$ for every $i \in \{1,\ldots, l\}$. 
\end{enumerate}
The \textit{length} of the path is $l$ and counts the number of edges involved. 
We term \textit{simple} a path where the edges are all distinct. 
We term \textit{circuit} a simple path of length $l \geq 3$ where the first and last node coincide. 
The graph $\GG$ is \textit{connected} if for any pair of nodes $v$, $w \in V$ there exists a simple path joining them.

\section{Opinion Dynamics Model}\label{sect:opi_model}
In this section we formulate the opinion dynamic model and consequently define the influence of the nodes. 

Consider a simple, undirected and connected graph $\GG = (I,E)$ with node set $I$ of cardinality $n$ and edge set $E$. 
Each node is an agent, endowed with a scalar opinion $x_i(t) $ of initial value $x_i(0) \in \R$, to be updated at discrete time steps $t \in \N$.  
All opinions are stacked in the vector $\xx(t) \in \R^I$.

The agent $\ell \in I$ is a stubborn \textit{leader} that never changes opinion:
$$x_\ell(t) = x_\ell(0) \qquad \forall t >0 .$$ 
The remaining \textit{regular} agents in $R^\ell := I \setminus \{\ell\} $ update their opinions to a convex combination of the opinions of their neighbors and the constant external opinion field $\xbias \in \R$.
Consider a sub-stochastic matrix $Q \in \Rp^{I\times I}$ and a non-negative vector $\qq \in \Rp^I$ such that:
\begin{align}
\label{eq:Q-q-struct}
	\left\{\begin{array}{l} 
		\Qij =0\Leftrightarrow  \{i,j\}\notin E \\
		\sum_j \Qij + q_i = 1  
	\end{array}\right. 
	\qquad \forall i \in R   \,.
\end{align}
The weights $\Qij$ and $\qi$ represent how much the agent $i$ trusts the agent $j$ and the opinion field, respectively. 
Each regular agent $i \in \Rl$ updates its opinion following:
$$x_i(t+1) = {\textstyle \sum_{j \in I} } \, \Qij \, x_j(t) + q_i \,x_\bias \qquad \forall t \geq 0 \,.$$
In compact form the  regular agents' update rule is:
$$	\xx_{R^\ell}(t+1) = Q_{R^\ell, R^\ell} \xx_{R^\ell}(t) + Q_{R^\ell, \{\ell\}} x_\ell(0) +\qq_{R^\ell} \xbias \quad  \forall t\geq 0\,. $$ 
We then make the following assumption to avoid trivial cases.
\setcounter{assumption}{-1}
\begin{assumption}\label{ass:zero}
The vector $\qq$ is not identically zero while, for any $\ell$, the matrix $Q_{R^\ell, R^\ell}$ is strictly sub-stochastic.
\eabullet\end{assumption}
Thanks to the connectivity of $\GG$, this assumption implies that $Q_{R^\ell, R^\ell}$ is a Schur stable matrix~\cite[Lemma~5]{PF-CR-RT-HI:13c}. 
Given the leader's opinion $x_\ell(0)$ and the opinion field $\xbias$, 
the regular agents' opinions tend to the limit $\xx_{R^\ell}(\infty)$, unique solution of:
\begin{align} 
	\xx_{R^\ell}(\infty) & = Q_{R^\ell, R^\ell} \xx_{R^\ell}(\infty) +  Q_{R^\ell, \{\ell\}} x_\ell(0) +\qq_{R^\ell} \xbias,  \label{eq:asy-R-eq}
\end{align}
and convex combinations of $x_\ell(0)$ and $\xbias$.

Since we want to take advantage of an electrical analogy, we need a ``reciprocity'' assumption (similar to reversibility) on the weights of the matrix $Q$ and vector $\qq$. 
For notational convenience, we define the extended set $I_\bias := I \cup \{\bias\}$. 
\begin{assumption}\label{ass:Qspeciale}
There exist a symmetric matrix $C \in \Rp^{I_\bias \times I_\bias}$ such that for every pair $i,j \in I $\,: 
$$ Q_{ij} = \frac{C_{ij} }{ \sum_{k \in I_\bias} C_{ik}} \,, \qquad q_i = \frac{C_{i\bias} }{ \sum_{k \in I_\bias} C_{ik}}\,. $$ 
\eabullet\end{assumption}
The non-negative symmetric matrix $C$ is called \textit{conductance matrix} and will be used in the following. 
For $i,j$ in $\Rl$, the values $C_{ij} = C_{ji}$ represent a measure of the strength of the reciprocal relation between $i$ and $j$. 
The values of $C_{\ell j}$ and $C_{\bias j}$ do not play any role, although they are fixed by the symmetry of $C$.  
Note that $C_{ij} > 0$ if $\{i,j\} \in E$ while $C_{i\bias }>0$ if $q_i >0$.

By the linearity of the dynamics, we can without loss of generality choose the following initial conditions.
\begin{assumption}\label{ass:null-Z-bias} The opinion of the leader is $x_\ell(0) = 1$; the value of the opinion field is $\xbias = 0$.
\eabullet\end{assumption}
Under these assumptions, we can define the \textit{harmonic influence} of $\ell$ as the sum of the asymptotic opinions of all agents, leader included:  
\begin{align*}
	H(\ell) := 1 + \1^\top \xx_\Rl(\infty)\,.
\end{align*}

Assumptions~\ref{ass:zero},~\ref{ass:Qspeciale}, and~\ref{ass:null-Z-bias} will hold in the rest of the paper.

\section{The Electrical Interpretation}\label{sec:elec}
 
In light of Assumptions~\ref{ass:zero} and~\ref{ass:Qspeciale}, the opinion dynamic model  
is intimately related to the linear circuit theory: the asymptotic opinions $\xx_\Rl(\infty)$ can be interpreted as electrical potentials in a suitably defined \textit{electrical network}.
An {electrical network} is a pair $(\NN, C)$, where $\NN = (J,F)$ is a simple connected graph and $C\in \Rp^{J\times J}$ is a non-negative, symmetric conductance matrix. The conductance matrix is ``adapted'' to the graph (i.e. $C_{ij} = C_{ji} = 0 \Leftrightarrow \{i,j\} \notin F$) 
and each edge $\{i,j\} \in F$ has electrical conductance $C_{ij}$. 

We describe the electrical network $(\NN,C)$ corresponding to the our opinion dynamic model. 
Consider the extended node set $I_\bias := I\cup\{\bias\}$ and the new graph $\NN = (I_\bias, E\cup E_\bias)$, obtained adding to $\GG$ the \textit{reference} node~$\bias$ and the edges in $E_\bias = \left\{ \{\bias,i\} : i\in I,\, q_i >0 \right\}$.
Following Assumptions~\ref{ass:zero}, the graph $\NN$ is connected by construction. 
The pair $(\NN, C)$, where $C$ is the conductance matrix of Assumption~\ref{ass:Qspeciale}, is the electrical network corresponding to our opinion dynamic model.

The asymptotic opinions of the regular agents, solution of \eqref{eq:asy-R-eq}, coincide with the electrical potentials that solve the circuit equations of the network.
\begin{lemma}[Potentials and opinions]\label{lem:elec-1}
Consider the opinion dynamic model on the graph $\GG$ with leader $\ell$ of initial opinion $x_\ell(0)$ and opinion field $\xbias$, and let Assumptions~\ref{ass:zero} and~\ref{ass:Qspeciale} hold. 
Consider the electrical network $(\NN,C)$ described above and the vector $\yy \in \R^{I_\bias}$ that contains the nodes' electrical potentials.
If the potentials of the node $\ell$ is held at $y_\ell = x_\ell(0)$ and the potential of the node $\bias$ is held at $\ybias = \xbias$, then $\yy_{R^\ell} = \xx_{R^\ell}(\infty) $.
\end{lemma}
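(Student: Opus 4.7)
The plan is to show that the electrical potential vector $\yy$, when restricted to $R^\ell$, satisfies the same linear system as $\xx_{R^\ell}(\infty)$ and then to invoke uniqueness. The whole argument rests on writing Kirchhoff's current law at each regular node and then algebraically reshaping the resulting equation with the help of Assumption~\ref{ass:Qspeciale}.

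First I would recall that, by definition of an electrical network, at any node whose potential is not externally imposed the currents leaving that node sum to zero. Since the potentials at $\ell$ and $\bias$ are clamped, Kirchhoff's law must hold at every $i\in R^\ell$, giving
\begin{equation*}
\sum_{j\in I_\bias} C_{ij}\,(y_j - y_i) \;=\; 0 \qquad \forall i\in R^\ell.
\end{equation*}
Dividing by $s_i := \sum_{k\in I_\bias} C_{ik}$ (which is strictly positive, either because $i$ has a neighbour in $\GG$ or because $q_i>0$), this rearranges to
\begin{equation*}
y_i \;=\; \sum_{j\in I_\bias} \frac{C_{ij}}{s_i}\,y_j \;=\; \sum_{j\in I} Q_{ij}\,y_j + q_i\,y_\bias,
\end{equation*}
where the last equality just uses Assumption~\ref{ass:Qspeciale} to identify $C_{ij}/s_i$ with $Q_{ij}$ for $j\in I$ and with $q_i$ for $j=\bias$.

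Second, I would substitute $y_\ell = x_\ell(0)$ and $y_\bias = \xbias$ and isolate the contribution of the clamped node $\ell$, obtaining, in block-matrix form on $R^\ell$,
\begin{equation*}
\yy_{R^\ell} \;=\; Q_{R^\ell,R^\ell}\,\yy_{R^\ell} + Q_{R^\ell,\{\ell\}}\,x_\ell(0) + \qq_{R^\ell}\,\xbias.
\end{equation*}
This is exactly equation~\eqref{eq:asy-R-eq}, of which $\xx_{R^\ell}(\infty)$ is a solution.

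Finally, I would close the argument by uniqueness: by Assumption~\ref{ass:zero} and the remark following it, $Q_{R^\ell,R^\ell}$ is Schur stable, so $\IDE - Q_{R^\ell,R^\ell}$ is invertible and \eqref{eq:asy-R-eq} admits a unique solution; hence $\yy_{R^\ell} = \xx_{R^\ell}(\infty)$. I do not anticipate any serious obstacle here — the only mildly delicate point is verifying that $s_i>0$ so that division is legal, which follows from connectivity of $\NN$ and Assumption~\ref{ass:zero}; and making sure that the node $\bias$, which is absent from the opinion-dynamics update, is correctly absorbed into the $q_i\,\xbias$ term via Assumption~\ref{ass:Qspeciale}.
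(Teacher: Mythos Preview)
Your proposal is correct and follows essentially the same route as the paper's own proof: write Kirchhoff's current law at each regular node, divide by $\sum_{k\in I_\bias} C_{ik}$ to recover the weights $Q_{ij}$ and $q_i$ from Assumption~\ref{ass:Qspeciale}, and identify the resulting fixed-point equation with~\eqref{eq:asy-R-eq}. If anything, you are slightly more explicit than the paper about the uniqueness step (via the Schur stability of $Q_{R^\ell,R^\ell}$), which the paper leaves implicit.
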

\begin{proof}
Given Assumption~\ref{ass:zero} and Assumption~\ref{ass:Qspeciale}, the electrical network $(\NN,C)$ is connected and contains two nodes that have fixed potential, namely $\ell$ and $\bias$.
The potential of the remaining nodes (in $R^\ell =  I_\bias \setminus \{\ell, \bias\}$) is uniquely determined by Kirchhoff's current law and Ohm's law. 
The system of $n-1$ independent \textit{node} equations is:
\begin{align} \label{eq:sys-node-eq}
	\forall i \in R^\ell \quad \sum_{j\in I_\bias} C_{ij} (y_i - y_j) = 0\,. 
\end{align}
Dividing each side for $\sum_{k \in I_\bias} C_{ik}$, we recognize the elements of $Q$ and $\qq$: 
\begin{align*}
	& \forall i \in R^\ell  \quad  \sum_{j\in I} \Qij (y_i - y_j) + \qi (y_i - \ybias) = 0\,, 
\intertext{and since $\qi + \sum_{j\in I}\Qij = 1$ for every $i \in R^\ell$, we get: }
	&   \forall i \in R^\ell  \quad y_i =  \sum_{j\in I \setminus \{\ell\}} \Qij y_j  + Q_{i\ell} y_\ell + \qi \ybias\,, 
\end{align*}
that is: $$\yy_{\Rl} = Q_{\Rl,\Rl} \yy_{\Rl} + Q_{\Rl,\{\ell\}}  y_\ell + \qq_{\Rl} \ybias\,.$$
If $y_\ell = x_\ell(0)$ and $\ybias = \xbias$, the vector $\yy_{R^\ell}$ coincides with $\xx_{R^\ell}(\infty)$ because it satisfies the set of equations \eqref{eq:asy-R-eq}.
\end{proof}

We introduce the \textit{Laplacian matrix} $L(C) \in \R^{I_\bias \times I_\bias}$ associated to the conductance matrix $C$:   $$L(C) = \diag( C \1) - C\,.$$
Assumption~\ref{ass:null-Z-bias} fixes the values of the leader opinion and the opinion field, and hence the potential of the  leader node $\ell$ and opinion field node~$\bias$.
The potential of the nodes in $\Rl$ can be computed by solving a linear, \textit{Laplacian} system with boundary condition, or by inverting the sub-matrix $L(C)_{\Rl,\Rl}$. 

\begin{lemma}[Potentials and Laplacians]\label{lem:elec-2}
Consider the electrical network $(\NN, C)$ described above, where the potentials of node $\ell$ and $\bias$ are fixed. 
Let Assumption~\ref{ass:zero},~\ref{ass:Qspeciale} and~\ref{ass:null-Z-bias} hold. 
The vector $\yy$ that contains the potentials of the electrical network is the solution of the Laplacial system with boundary conditions:
\begin{align} \label{eq:lap-sys} \left\{ \begin{array}{l} 
	\left( L(C) \, \yy \right)_{R^\ell}= \0 \\ 
	y_\ell = 1 \\
	y_\bias = 0 \,. \end{array}\right. \end{align}
The potentials of the nodes in $\Rl$ can also be computed as: 
$$	\yyRl = (L(C)_{\Rl,\Rl})^{-1} \, C_{\Rl,\{\ell\} } \, .$$
\end{lemma}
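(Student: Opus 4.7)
}
The strategy is to recognize that the Laplacian system~\eqref{eq:lap-sys} is merely a matrix-form restatement of the node equations~\eqref{eq:sys-node-eq} used in Lemma~\ref{lem:elec-1}, and then to perform a block elimination to solve for $\yyRl$ explicitly.

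First, I would observe that for any $i \in I_\bias$ one has
$(L(C)\yy)_i = (C\1)_i \, y_i - \sum_{j\in I_\bias} C_{ij} y_j = \sum_{j\in I_\bias} C_{ij}(y_i - y_j)$,
which is exactly the left-hand side of the node equation at $i$. Hence the requirement $(L(C)\yy)_{R^\ell}=\0$ reproduces Kirchhoff's current law~\eqref{eq:sys-node-eq} at every non-boundary node. The boundary conditions $y_\ell = 1$ and $y_\bias = 0$ follow from Assumption~\ref{ass:null-Z-bias} together with the hypothesis (inherited from Lemma~\ref{lem:elec-1}) that $y_\ell = x_\ell(0)$ and $y_\bias = \xbias$. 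This establishes~\eqref{eq:lap-sys}.

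Next, I would split the block row $(L(C)\yy)_{R^\ell}=\0$ according to $I_\bias = R^\ell \cup \{\ell\} \cup \{\bias\}$ and use that, because $L(C)$ has off-diagonal entries $L(C)_{ij}=-C_{ij}$,
\[
L(C)_{R^\ell,R^\ell}\,\yyRl \;=\; -L(C)_{R^\ell,\{\ell\}}\,y_\ell - L(C)_{R^\ell,\{\bias\}}\,y_\bias \;=\; C_{R^\ell,\{\ell\}}\,,
\]
after substituting $y_\ell=1$ and $y_\bias=0$. Solving for $\yyRl$ gives the stated formula, provided $L(C)_{R^\ell,R^\ell}$ is invertible.

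The only point requiring slight care is this invertibility. I would factor $L(C)_{R^\ell,R^\ell} = D\,(\IDE - Q_{R^\ell,R^\ell})$, where $D$ is the diagonal matrix with entries $D_{ii} = \sum_{k\in I_\bias} C_{ik}$ for $i\in R^\ell$. By Assumption~\ref{ass:Qspeciale} and the connectivity of $\GG$, each such $D_{ii}$ is strictly positive, so $D$ is invertible. Assumption~\ref{ass:zero} (together with the cited consequence that $Q_{R^\ell,R^\ell}$ is Schur stable) implies that $\IDE - Q_{R^\ell,R^\ell}$ is nonsingular, so the factorization is invertible and the formula is justified. I do not expect any real obstacle in this proof: it is essentially a transcription of Lemma~\ref{lem:elec-1} into matrix form followed by a block elimination, with the invertibility argument being the only nontrivial bookkeeping step.
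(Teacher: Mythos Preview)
Your proposal is correct and follows essentially the same route as the paper: rewrite the node equations~\eqref{eq:sys-node-eq} as $(L(C)\yy)_{R^\ell}=\0$, append the boundary values, and block-eliminate to obtain $L(C)_{R^\ell,R^\ell}\,\yyRl = C_{R^\ell,\{\ell\}}$. The only difference is in how invertibility of $L(C)_{R^\ell,R^\ell}$ is justified: the paper invokes the fact that a proper principal submatrix of the Laplacian of a connected graph is positive definite (citing an external reference), whereas you factor $L(C)_{R^\ell,R^\ell}=D(\IDE-Q_{R^\ell,R^\ell})$ and use the Schur stability of $Q_{R^\ell,R^\ell}$ already established after Assumption~\ref{ass:zero}. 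Your argument is self-contained relative to the paper and equally valid; the paper's argument is shorter but relies on an outside result.
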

\begin{proof}
The $i^{\text{th}}$ equation  in  the system \eqref{eq:sys-node-eq}, where $i \in \Rl$, reads:  
$$ {\textstyle y_i \sum_{j\in I_\bias} C_{ij} - \sum_{j\in I_\bias} C_{ij} y_j = 0} \,,$$ and can be rewritten as:
$$   (( \diag(C\1 ) - C ) \,\yy )_i = 0 \,.  $$ 
Given the boundary conditions and the definition of $L(C)$ we recognize the Laplacian system above.
Then: 
$$L(C)_{\Rl,\Rl} \, \yy_\Rl +  L(C)_{\Rl,\{\ell\}} y_\ell +  L(C)_{\Rl,\{\bias\}} y_\bias = \0\,,$$
which, using the boundary conditions,  gives: 
$$L(C)_{\Rl,\Rl} \, \yy_\Rl = -  L(C)_{\Rl,\{\ell\}} = C_{\Rl,\{\ell\}} \,,$$ 
because $C_{ij} = -L_{ij}$ if $i\neq j$.
The result follows since $\NN$ is connected and $\emptyset \subset \Rl \subset I_\bias$ so the matrix $\LRl$ is positive definite~\cite{4177510} and hence invertible. 
\end{proof}
The potential of the nodes in  $\Rl$ is said to be the \textit{harmonic} extension of the  potential fixed at the leader and opinion field nodes to the regular nodes. 
Using the electrical interpretation -- Lemmas~\ref{lem:elec-1} and~\ref{lem:elec-2} --
the \textit{harmonic influence} of $\ell$ can be expressed as: 
\begin{align*}
	H(\ell) &= 1 + \1^\top \yyRl \nonumber \\
	&=1 + \1^\top \left(L(C)_{\Rl,\Rl}\right)^{-1} \, C_{\Rl,\{\ell\} } \,.
\end{align*}

\subsection{Equivalence with the formulation of~\cite{Vassio:2014:journal,Vassio:2014:ECC}} 
Using the electrical analogy, we now briefly recall the problem formulated in~\cite{Vassio:2014:journal} and show that it is equivalent to our problem. The advantage of our formulation is that it allows for an effective analysis of the Message Passing Algorithm presented in the next section.

The paper~\cite{Vassio:2014:journal} studies the \textit{harmonic influence} of the nodes in a graph where a few leaders with null opinion are already present. Basically, these leaders provide the ``opinion field'' that we introduced earlier in this paper, and vice-versa, our opinion field can be regarded as an additional leader with null opinion, to which some nodes are connected. The equivalence is straightforward from the point of view of the electrical interpretation.
For a detailed description of the equivalence, let $(\tilde \GG, \tilde C)$ be the electrical network in~\cite{Vassio:2014:journal} with node potentials $\tilde \yy$, where $\tilde \GG = (\tilde I, \tilde E)$ is a connected graph and  $\tilde C \in \Rp^{\tilde I\times \tilde I}$ is the conductance matrix. %
The nodes of a subset $S^0 \subset \tilde I$ have fixed, null potential, i.e. $\tilde \yy_{S^0} = \0$ and are assumed to be \textit{leaves}, without loss of generality.
The node $\ell \in \tilde I \setminus S^0$ has potential fixed to one, i.e. $\tilde y_\ell = 1$.
The potential of the remaining nodes, i.e. those in $\tilde R^\ell = \tilde I \setminus S^0 \setminus \ell $,  are determined by the Laplacian system with boundary conditions: 
\begin{align} \label{eq:lap-sys-vassio }\left\{ \begin{array}{l} 
	\left( L(\tilde C) \, \tilde \yy \right)_{\tilde R^\ell}= \0 \\ 
	\tilde y_\ell = 1 \\
	\tilde \yy_{S^0} = \0 \,. \end{array}\right.\end{align}

From the electrical network $(\tilde \GG, \tilde C)$ of~\cite{Vassio:2014:journal}, we obtain our  electrical network  $(\NN,C)$ by collapsing all the nodes in $S^0$ into a single node, recognized to be the opinion field node $\bias$. It is possible to collapse the node in $S^0$ (\textit{gluing} operation, see~\cite{Vassio:2014:journal})  because they all have the same potential.
Possible parallel edges created by the gluing operation shall be substituted by a single edge having conductance equal to the sum of the conductances of the parallel edges. The left and central scheme of Figure~\ref{fig:equiv-vassio} show this equivalence. 

The other direction of the equivalence is easier. Consider our graph $\GG = (I,E)$ and the graph $\NN = (I_\bias, E\cup E_\bias)$ of the electrical network $(\NN,C)$ associated. 
If $|E_\bias| = 1$, we recognize the opinion field node to be a leader with null opinion and set $(\tilde \GG, \tilde C) \equiv (\NN, C)$. 
If $|E_\bias| > 1$, to preserve the fact that the leaders with null opinion are leaves, we consider $|E_\bias|$ leader nodes with null opinion, and connect each of them to a different node previously connected to the opinion field node. 
The central and right scheme of Figure~\ref{fig:equiv-vassio} show this operation. The conductance matrix $\tilde C$ is obtained accordingly from $C$.

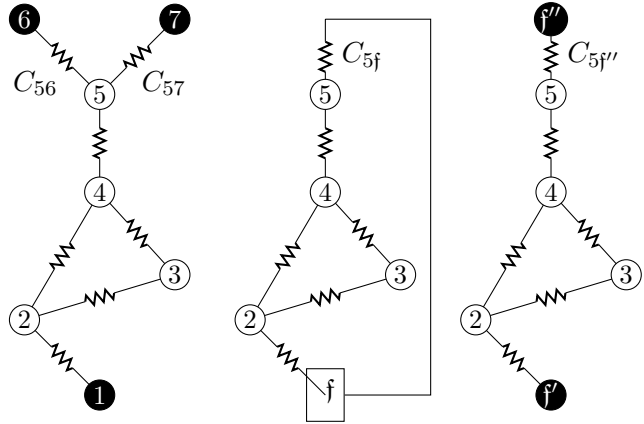
\begin{figure}
\centering
\ctikzset{ bipoles/length =.5 cm , label/align = straight}
\begin{tikzpicture}

	
	\tikzstyle{G node} =  [shape = circle, draw, fill = white, minimum size = 4mm, inner sep = 0mm];
	\tikzstyle{GZ node} = [shape = circle, draw, fill = black, minimum size = 4mm, inner sep = 0mm];


	\node[GZ node] (A1) at (1,0) {\white\small$1$};
	\node[G node] (A2) at (0,1) {\small$2$};
	\node[G node] (A3) at (2,1.6) {\small$3$};
	\node[G node] (A4) at (1,2.7) {\small$4$};
	\node[G node] (A5) at (1,4) {\small$5$};
	\node[GZ node] (A6) at (0,5) {\white\small$6$};
	\node[GZ node] (A7) at (2,5) {\white\small$7$};
	
\begin{pgfonlayer}{myback}
	\draw(A1) to [R] (A2); 
	\draw(A2) to [R] (A3); 
	\draw(A2) to [R] (A4); 
	\draw(A3) to [R] (A4); 
	\draw(A4) to [R] (A5); 
	\draw(A5) to [R, l^=$C_{56}$] (A6); 
	\draw(A5) to [R, l_=$C_{57}$] (A7); 
\end{pgfonlayer}
	
	\node[G node] (B2) at (0 + \tikzS ,1) {\small$2$};
	\node[G node] (B3) at (2 + \tikzS ,1.6) {\small$3$};
	\node[G node] (B4) at (1 + \tikzS ,2.7) {\small$4$};
	\node[G node] (B5) at (1 + \tikzS ,4) {\small$5$};
	\node[rectangle,draw] (B0) at (4,0) {~\small \raisebox{2mm}{$\bias$}}; 

\begin{pgfonlayer}{myback}
	\draw(B0) to [R] (B2); 
	\draw(B2) to [R] (B3); 
	\draw(B2) to [R] (B4); 
	\draw(B3) to [R] (B4); 
	\draw(B4) to [R] (B5); 
	\draw(B5) to [R, l_=$C_{5\bias}$] (1 + \tikzS ,5) -- (5.4,5) -- (5.4,0) --  (B0);
	
\end{pgfonlayer}	

	\node[GZ node] (C0) at (1 + \tikzSS ,0) {\white\small$\bias'$};
	\node[G node] (C2) at (0 + \tikzSS ,1) {\small$2$};
	\node[G node] (C3) at (2 + \tikzSS ,1.6) {\small$3$};
	\node[G node] (C4) at (1 + \tikzSS ,2.7) {\small$4$};
	\node[G node] (C5) at (1 + \tikzSS ,4) {\small$5$};
	\node[GZ node] (C000) at (1 + \tikzSS ,5) {\white\small$\bias''$};

\begin{pgfonlayer}{myback}
	\draw(C0) to [R] (C2); 
	\draw(C2) to [R] (C3); 
	\draw(C2) to [R] (C4); 
	\draw(C3) to [R] (C4); 
	\draw(C4) to [R] (C5); 
	\draw(C5) to [R , l_=$C_{5\bias''}$] (C000); 
\end{pgfonlayer}	

\end{tikzpicture}
\caption{\label{fig:equiv-vassio}  Equivalence with the formulation of~\cite{Vassio:2014:journal}. 
The leftmost and rightmost scheme follow the formulation of~\cite{Vassio:2014:journal}: the black nodes are the leaders with null opinion, in $S^0$. 
The central scheme is based on our formulation: the reference node (drawn with a rectangle) represents the opinion field, at null potential. The comparison between left and central scheme shows the mapping from~\cite{Vassio:2014:journal} to this paper; note that $C_{5\bias} = C_{56} + C_{57}$. The comparison between central and right scheme shows the mapping from this paper to~\cite{Vassio:2014:journal}; note that $ C_{5\bias'' } = C_{5\bias' } = C_{5\bias}$.} 
\end{figure}

\section{Distributed computation of the influence}\label{sec:mpa}

In this section we present a Message Passing Algorithm (MPA) to compute in a distributed way the influence of every node in $\GG = (I,E)$. 
In principle, the computation of the  \textit{harmonic influence} of the nodes in the set $I$ requires the solution of  $|I|$ linear systems of the form \eqref{eq:lap-sys}. 
This approach requires global knowledge of the graph and of the matrix $L(C)$: moreover, solving these $|I|$ systems independently does not exploit the apparent redundancies between them. Because of these drawbacks, in this paper we follow the approach by~\cite{Vassio:2014:journal} and we propose a distributed MPA for the distributed computation of the node's influence.
The intuition of the algorithm is provided by the electrical analogy and the messages exchanged in the algorithm have an exact interpretation if $\GG$ is a tree. In such a case, the algorithm is exact; else, we are going to prove that the algorithm converges to an approximation of the \textit{harmonic influence}.

\begin{figure}
\centering
\ctikzset{ bipoles/length =.5 cm , label/align = straight}
\begin{tikzpicture}

	
	\tikzstyle{G node} =  [shape = circle, draw, fill = white, minimum size = 4mm, inner sep = 0mm];
	\tikzstyle{GZ node} = [shape = circle, draw, fill = black, minimum size = 4mm, inner sep = 0mm];

	\coordinate (root) at (2,1);
	\coordinate (v1) at (2,2);
	\coordinate (v2r) at (3,3.5);
	\coordinate (v2l) at (1,3.5);
	\coordinate (v3rr) at (3.5,5);
	\coordinate (v3r) at (2.5,5);
	\coordinate (v3l) at (1.5,5);
	\coordinate (v3ll) at (0.5,5);
	
	\coordinate (Nroot) at (2 + \tikzSSS ,1);
	\coordinate (Nv1) at (2 + \tikzSSS ,2);
	\coordinate (Nv2r) at (3 + \tikzSSS ,3.5);
	\coordinate (Nv2l) at (1 + \tikzSSS ,3.5);
	\coordinate (Nv3rr) at (3.5 + \tikzSSS ,5);
	\coordinate (Nv3r) at (2.5 + \tikzSSS ,5);
	\coordinate (Nv3l) at (1.5 + \tikzSSS ,5);
	\coordinate (Nv3ll) at (0.5 + \tikzSSS ,5);


	\path (root) edge (v1);
	\path (v1) edge (v2r);
	\path (v1) edge (v2l);
	\path (v2r) edge (v3rr);
	\path (v2r) edge (v3r);
	\path (v2l) edge (v3l);
	\path (v2l) edge (v3ll);

	\node [GZ node] at (root)  {\white$\ell$};
	\node [G node] at (v1) {$j$};
	\node [G node] at (v2r) {$i$};
	\node [G node] at (v2l) {$i'$};
	\node [G node] at (v3rr) {$k$};
	\node [G node] at (v3r) {$k'$};
	\node [G node] at (v3l) {$h$};
	\node [G node] at (v3ll) {$h'$};


	\draw (Nroot) to [R] (Nv1);
	\draw (Nv1) to [R] (Nv2r);
	\draw (Nv1) to [R] (Nv2l);
	\draw (Nv2r) to [ R] (Nv3rr);
	\draw (Nv2r) to [R] (Nv3r);
	\draw (Nv2l) to [R] (Nv3l);
	\draw (Nv2l) to [R] (Nv3ll);
	\draw (7,.4)  node [rectangle,draw]{};
	\draw (7,.4) to [battery] (7,1) to (Nroot){};
	\draw (Nroot) to [-] (5 ,1) to [R] (5 ,.4) node [rectangle,draw]{};
	\draw (Nv1) to [-] (5,2) to [R] (5,1.4) node [rectangle,draw]{};
	\draw (Nv2r) to [-] (8.2 ,3.5) to [R] (8.2 , 2.9) node [rectangle,draw]{};
	\draw (Nv2l) to [-] (4.7,3.5) to [R] (4.7 ,2.9) node [rectangle,draw]{};
	\draw (Nv3rr) to [-] (8.2 ,5) to [R] (8.2,4.4) node [rectangle,draw]{};
	\draw (Nv3r) to [-] (6.4,5) to [R] (6.4,4.4) node [rectangle,draw]{};
	\draw (Nv3l) to [-] (1.8 + \tikzSSS ,5) to [R] (1.8 + \tikzSSS ,4.4) node [rectangle,draw]{};
	\draw (Nv3ll) to [-] (.1 + \tikzSSS ,5) to [R] (.1 + \tikzSSS ,4.4) node [rectangle,draw]{};

	\node [GZ node] at (Nroot)  {\white$\ell$};
	\node [G node] at (Nv1) {$j$};
	\node [G node] at (Nv2r) {$i$};
	\node [G node] at (Nv2l) {$i'$};
	\node [G node] at (Nv3rr) {$k$};
	\node [G node] at (Nv3r) {$k'$};
	\node [G node] at (Nv3l) {$h$};
	\node [G node] at (Nv3ll) {$h'$};

\end{tikzpicture}
\caption{\label{fig:tree-G-N} A tree graph $\GG = (I,E)$ on the left. The electrical network $\NN = (I_\bias, E\cup E_\bias)$  on the right corresponds to the case where every element of the vector $\qq$ is larger than zero. Signal grounds are drawn with empty squares.}
\end{figure}
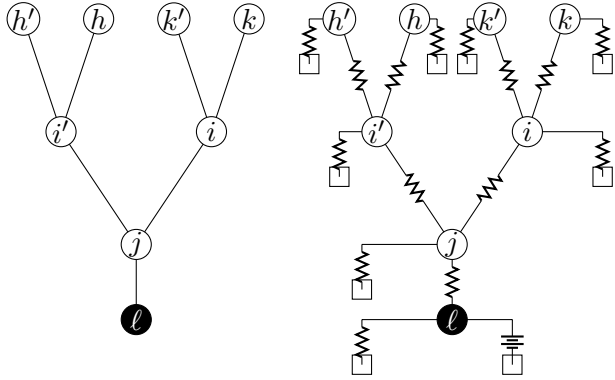

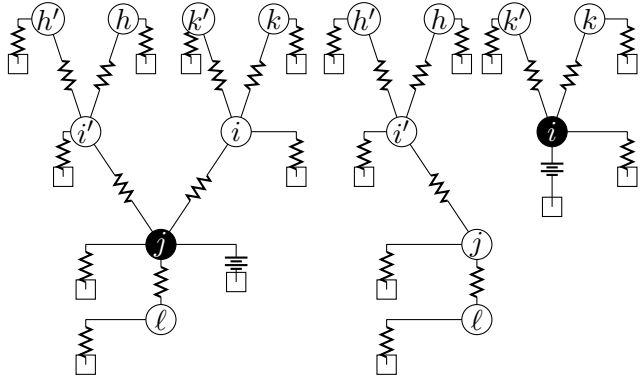
\begin{figure}
\centering
\ctikzset{ bipoles/length =.5 cm , label/align = straight}
\begin{tikzpicture}

	
	\tikzstyle{G node} =  [shape = circle, draw, fill = white, minimum size = 4mm, inner sep = 0mm];
	\tikzstyle{GX node} =  [shape = circle, draw, fill = white, minimum size = 4mm, inner sep = 0mm];
	\tikzstyle{GZ node} = [shape = circle, draw, fill = black, minimum size = 4mm, inner sep = 0mm];

	\coordinate (root) at (2,1);
	\coordinate (v1) at (2,2);
	\coordinate (v2r) at (3,3.5);
	\coordinate (v2l) at (1,3.5);
	\coordinate (v3rr) at (3.5,5);
	\coordinate (v3r) at (2.5,5);
	\coordinate (v3l) at (1.5,5);
	\coordinate (v3ll) at (0.5,5);
	
	\coordinate (Nroot) at (2 + \tikzSSS ,1);
	\coordinate (Nv1) at (2 + \tikzSSS ,2);
	\coordinate (Nv2r) at (3 + \tikzSSS ,3.5);
	\coordinate (Nv2l) at (1 + \tikzSSS ,3.5);
	\coordinate (Nv3rr) at (3.5 + \tikzSSS ,5);
	\coordinate (Nv3r) at (2.5 + \tikzSSS ,5);
	\coordinate (Nv3l) at (1.5 + \tikzSSS ,5);
	\coordinate (Nv3ll) at (0.5 + \tikzSSS ,5);

	\draw (root) to [R] (v1);
	\draw (v1) to [R] (v2r);
	\draw (v1) to [R] (v2l);
	\draw (v2r) to [ R] (v3rr);
	\draw (v2r) to [R] (v3r);
	\draw (v2l) to [R] (v3l);
	\draw (v2l) to [R] (v3ll);
	\draw (root) to [-] (1 ,1) to [R] (1 ,.4) node [rectangle,draw]{};
	\draw (v1) to [-] (1,2) to [R] (1,1.4) node [rectangle,draw]{};
	\draw (v2r) to [-] (3.8 ,3.5) to [R] (3.8 , 2.9) node [rectangle,draw]{};
	\draw (v2l) to [-] (0.7,3.5) to [R] (0.7 ,2.9) node [rectangle,draw]{};
	\draw (v3rr) to [-] (3.8 ,5) to [R] (3.8,4.4) node [rectangle,draw]{};
	\draw (v3r) to [-] (2.4,5) to [R] (2.4,4.4) node [rectangle,draw]{};
	\draw (v3l) to [-] (1.8  ,5) to [R] (1.8 ,4.4) node [rectangle,draw]{};
	\draw (v3ll) to [-] (.1  ,5) to [R] (.1  ,4.4) node [rectangle,draw]{};
	
	\draw (3,1.5) node [rectangle,draw]{};
	\draw (3,1.5) to [battery] (3,2) to (v1);

	\node [G node] at (root)  {$\ell$};
	\node [GZ node] at (v1) {\white$j$};
	\node [G node] at (v2r) {$i$};
	\node [G node] at (v2l) {$i'$};
	\node [G node] at (v3rr) {$k$};
	\node [G node] at (v3r) {$k'$};
	\node [G node] at (v3l) {$h$};
	\node [G node] at (v3ll) {$h'$};

\begin{scope}
\draw (Nroot) to [R] (Nv1);
	\draw (Nv1) to [R] (Nv2l);
	\draw (Nv2l) to [R] (Nv3l);
	\draw (Nv2l) to [R] (Nv3ll);
	\draw (Nroot) to [-] (5 ,1) to [R] (5 ,.4) node [rectangle,draw]{};
	\draw (Nv1) to [-] (5,2) to [R] (5,1.4) node [rectangle,draw]{};
	\draw (Nv2l) to [-] (4.7,3.5) to [R] (4.7 ,2.9) node [rectangle,draw]{};
	\draw (Nv3l) to [-] (1.8 + \tikzSSS ,5) to [R] (1.8 + \tikzSSS ,4.4) node [rectangle,draw]{};
	\draw (Nv3ll) to [-] (.1 + \tikzSSS ,5) to [R] (.1 + \tikzSSS ,4.4) node [rectangle,draw]{};
		
\end{scope}

	\draw (Nv2r) to [ R] (Nv3rr);
	\draw (Nv2r) to [R] (Nv3r);
	\draw (Nv2r) to [-] (8.2 ,3.5) to [R] (8.2 , 2.9) node [rectangle,draw]{};
	\draw (Nv3rr) to [-] (8.2 ,5) to [R] (8.2,4.4) node [rectangle,draw]{};
	\draw (Nv3r) to [-] (6.4,5) to [R] (6.4,4.4) node [rectangle,draw]{};

	\draw (7.2,2.5) node [rectangle,draw]{};
	\draw (7.2,2.5) to [battery] (Nv2r);

	\node [GZ node] at (Nv2r) {\white$i$};

	\node [G node] at (Nv3rr) {$k$};
	\node [G node] at (Nv3r) {$k'$};

	\node [GX node] at (Nroot)  {$\ell$};
	\node [GX node] at (Nv1) {$j$};
		\node [GX node] at (Nv3l) {$h$};
	\node [GX node] at (Nv3ll) {$h'$};
		\node [GX node] at (Nv2l) {$i'$};
\end{tikzpicture}
\caption{\label{fig:def-W-H} Consider an electrical network whose underlying graph $\GG$ is a tree, like in Figure~\ref{fig:tree-G-N}. Then, the quantity $W^{i\to j}(\infty)$ is the potential of $i$ if $j$ has potential one (on the left), while the quantity $H^{i\to j}(\infty)$ is the influence of $i$ in the network where the edge $\{i,j\}$ has been removed (on the right). Signal grounds are drawn with empty squares. }
\end{figure}

Consider a connected graph $\GG = (I,E)$ and the opinion dynamic model with matrix $Q$ and vector $\qq$, such that Assumption~\ref{ass:zero},~\ref{ass:Qspeciale} and~\ref{ass:null-Z-bias} are satisfied. 
Let $t \in\{0,1,\ldots\}$ be the iteration counter. 
 The MPA 
 works as follows.
For any ordered pair of nodes $(j,i)$ such that $\{i,j\} \in E$, the node $i$ sends to its neighbour $j$ two messages, $\Wij(t) \in [0,1]$ and $\Hij(t) \in \Rp$, initialised by:
\begin{align*}
	\Wij(0) = 1 \qquad  \Hij(0) = 1\,.
\end{align*}
All messages are updated synchronously following the rules:
\begin{align}
\label{eq:MPA-update-1}
&\Wij(t+1) = \frac{1}{1 + \frac{ q_i }{Q_{ij}} + {\textstyle \sum_{k \in N_i^j }} \frac{Q_{ik}}{Q_{ij}} \left(1- \Wki(t) \right) } \\ 
&\Hij(t+1) = 1 + {\textstyle \sum_{k \in N_i^j }} \Wki(t)\, \Hki(t)\,, \label{eq:MPA-update-2} 
\end{align} 
where $N_i^j:= N_i \setminus \{j\}$. 
At any time, node $\ell \in I$ can compute an approximation of its influence $H(\ell)$ by:
\begin{align*}
	H^\ell(t)  =  1 + {\textstyle \sum_{i \in N_\ell }} \Wil(t)\, \Hil(t)\,. \nonumber
\end{align*}

If the graph $\GG = (I,E)$ is a tree, the MPA converges in a number of steps equal to the diameter of the graph and is exact, i.e. $ \lim_{t \to \infty }  H^\ell(t) = H(\ell)\,.$ 
The asymptotic values of the messages have a simple and exact interpretation (see  Figure~\ref{fig:tree-G-N} and~\ref{fig:def-W-H}):
\begin{enumerate}[leftmargin=2cm] 
\item[$\Wij(\infty)$~] 
	is the asymptotic opinion of $i$ if the leader is $j$. In other words, is the potential of $i$ in the electrical network where $j$ is connected to a unitary voltage source. 
\item[$\Hij(\infty)$~] 
	is the  influence of $i$ in its sub-graph, after the edge $\{i,j\}$ has been removed. In other words, it is the sum of the node's potentials in the sub-network containing $i$, after the edge $\{i,j\}$ has been removed and 
	$i$ has been connected to a unitary voltage source.
\end{enumerate} 
The electrical interpretation of $\Wij(\infty)$ and $\Hij(\infty)$  allows to recover the rules \eqref{eq:MPA-update-1} and \eqref{eq:MPA-update-2} easily. 
First observe that, using $\Wki(\infty)$ for $k\in N_i^j$,  $\Wij(\infty)$ can be computed by:
$$ \Wij(\infty) = \frac{1}{1 + \frac{ C_{i\bias} }{C_{ij}} + {\textstyle \sum_{k \in N_i^j }} \frac{C_{ik}}{C_{ij}} \left(1- \Wki(\infty) \right) }\,. $$
Each value $\Wki(\infty)$ encodes the equivalent conductance from $k$ to the reference node, in the graph $\GG$ without the edge $\{i,k\}$. Actually, the update rule \eqref{eq:MPA-update-1} comes from the expression above, since $ \frac{ q_i }{Q_{ij}} = \frac{ C_{i\bias} }{C_{ij}} $ and  $  \frac{Q_{ik}}{Q_{ij}} = \frac{C_{ik}}{C_{ij}} $.

Now, consider two edges, $\{i,j\}$ and $\{i,k\}$, with a common node $i$, and let the potential of $j$ be fixed to one. 
Since $\GG$ is a tree, any current to $k$ must flow through $i$. 
Although $\{j,k\} \notin E$, denote by $W^{k\to j}(\infty)$ the potential of $k$ and observe that: 
$$W^{k\to j}(\infty) =  \Wki(\infty) \, \Wij(\infty) \,. $$
The property above 
(see~\cite[Eq. (10)]{Vassio:2014:journal})
allows to compute $H^\ell(\infty)$ recursively: each potential is expressed as product of factors, that can be re-organized following the interpretation of $\Hij(\infty)$. 

If the graph $\GG$ is a tree, the asymptotic values $\Wij(\infty)$ and $\Hij(\infty)$ can be used to compute $H^\ell(\infty)$ by a series of recursive steps that start from the leaves of $\GG$ and continues towards the node $\ell$. Actually, by the MPA any node $\ell$ computes the correct value $H(\ell)$ in a number of steps that is smaller or equal to the diameter of the graph.

More in general, asymptotic convergence can be proved for the algorithm, as stated in the following result that is the main contribution of this paper.

\begin{theorem}[Convergence]\label{thm:conv-MPA-G}
Under the standing assumptions, the MPA converges on any graph $\GG$.
\end{theorem}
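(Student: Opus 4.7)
The plan is to decouple the two streams of messages: the update \eqref{eq:MPA-update-1} for $\Wij$ does not involve the $H$-messages, so I treat it in isolation. Multiplying numerator and denominator by $Q_{ij}$ and using $q_i+\sum_{k\in N_i}Q_{ik}=1$, the rule takes the compact form
$$\Wij(t+1) = \frac{Q_{ij}}{1 - \sum_{k\in N_i^j} Q_{ik}\,\Wki(t)},$$
a map that is coordinatewise non-decreasing in each $\Wki(t)$ on $[0,1]^{2|E|}$ and whose denominators stay uniformly bounded below by $Q_{ij}+q_i>0$. From $W(0)=\1$ one computes $\Wij(1)=Q_{ij}/(Q_{ij}+q_i)\leq 1$, hence $W(1)\vecleq W(0)$; by the monotonicity of the map, the inequality $W(t+1)\vecleq W(t)$ propagates inductively for all $t$. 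Because the sequence is monotone in the compact cube $[0,1]^{2|E|}$, it converges componentwise to a limit $W(\infty)$ that solves the fixed-point equation and satisfies $\Wij(\infty)\geq Q_{ij}>0$.

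The $H$-update \eqref{eq:MPA-update-2} is affine given $W$: I rewrite it as $H(t+1)=\1+M(t)\,H(t)$ with the non-negative directed-edge matrix $M(t)_{(i,j),(k,i)}:=\Wki(t)$ for $k\in N_i^j$ (and zero elsewhere). By the previous paragraph, $M(t)\to M(\infty)$ componentwise and monotonically from above. If I can establish Schur stability of the limit, $\rho(M(\infty))<1$, then $H(\infty):=(\IDE-M(\infty))^{-1}\1\in\Rp^{2|E|}$ is the unique non-negative solution of $H=\1+M(\infty)H$, and continuity of the spectral radius gives $\rho(M(t))<1$ for all sufficiently large~$t$. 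Iterating the identity
$$H(t+1)-H(\infty) = M(t)\,[H(t)-H(\infty)] + [M(t)-M(\infty)]\,H(\infty)$$
then forces $H(t)\to H(\infty)$ by a standard perturbation argument, and hence $H^\ell(t)=1+\sum_{i\in N_\ell}\Wil(t)\Hil(t)$ also converges, which is the statement of Theorem~\ref{thm:conv-MPA-G}.

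The technical heart is therefore the spectral bound $\rho(M(\infty))<1$. The natural route is Perron--Frobenius: produce a strictly positive vector $v$ and a constant $\lambda<1$ with $M(\infty)\,v\vecleq\lambda v$. To build~$v$ I would exploit the electrical reading of the fixed-point equation afforded by Assumption~\ref{ass:Qspeciale}: setting $A^{i\to j}:=C_{ij}(1-\Wij(\infty))/\Wij(\infty)$, elementary algebra turns the fixed-point equation into the Foster-type identity
$$A^{i\to j} = C_{i\bias} + \sum_{k\in N_i^j} C_{ik}\,(1-\Wki(\infty)),$$
which identifies $A^{i\to j}$ with the effective conductance from~$i$ to the opinion-field node in the (computation-tree) network obtained by deleting the edge $\{i,j\}$, and exposes a fractional dissipation $C_{i\bias}/A^{i\to j}$ absorbed by the reference node at each step of a non-backtracking walk. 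Since $\GG$ is connected and $\qq\not\equiv\0$ by Assumption~\ref{ass:zero}, this dissipation is globally non-trivial, and the weight $v_{(i,j)}\propto C_{ij}(1-\Wij(\infty))$ (the edge current produced when $j$ is held at unit potential) is the natural Perron--Frobenius witness. Turning this intuition into a quantitative contraction factor $\lambda<1$ that is uniform across all directed edges of an arbitrary graph $\GG$ is the main difficulty to be resolved in Section~\ref{sect:convergence}.
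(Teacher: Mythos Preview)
Your overall architecture coincides with the paper's: monotone convergence of the $W$-messages, followed by Schur stability of the limit matrix governing the affine $H$-dynamics. Your candidate witness $v_{(i,j)}=C_{ij}\bigl(1-\Wij(\infty)\bigr)$ is in fact exactly the vector the paper uses (it is $x_{ji}=s_{ji}(1-\bar\omega_{ji})$ in their notation). However, the proposal leaves the spectral bound unfinished, and the route you sketch toward it has two concrete obstructions.

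First, the vector $v$ is \emph{not} strictly positive in general. If $i$ is a leaf of $\GG$ with $q_i=0$, or more generally if the directed edge $ji$ in the message digraph cannot reach any directed edge $hk$ with $q_k>0$, then $\Wij(\infty)=1$ and $v_{(i,j)}=0$. Since Assumption~\ref{ass:zero} only requires $\qq\neq\0$, this is allowed. Consequently a global Perron--Frobenius inequality $M(\infty)v\vecleq\lambda v$ with $\lambda<1$ and $v\succ\0$ is unavailable. The paper handles this by first passing to the message digraph $\MM_\GG$ and decomposing it into strongly connected components; Lemma~\ref{lem:G-gamma-ok-for-D-alpha} then shows that on every \emph{non-trivial} component every $\Wij(\infty)<1$, so $v$ is strictly positive there. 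Trivial components are dispatched by an acyclic-ordering induction.

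Second, even on a strongly connected component, seeking a one-step contraction factor $\lambda<1$ is unnecessarily hard. Your Foster-type identity, rewritten in matrix form with $B:=\diag(\bar W)\,M$ (which is similar to $M(\infty)=M\,\diag(\bar W)$), reads $B\xx=\xx-\cc$, where $\xx=v$ and $c_{(i,j)}=C_{i\bias}\Wij(\infty)\ge0$ with $\cc\neq\0$. Iterating gives
\[
B^{n}\xx \;=\; \xx-\sum_{i=0}^{n-1}B^{i}\cc,
\]
and irreducibility of $B$ on the component makes $\sum_{i=0}^{n-1}B^{i}$ strictly positive, hence $B^{n}\xx\prec\xx$ componentwise and $\rho(B)<1$. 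This sidesteps the search for a uniform $\lambda$ entirely and is precisely how the paper closes the argument in Lemma~\ref{lem:D-strongly}.
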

Thus, the MPA converges even if the graph $\GG$ is not a tree, but convergence is asymptotical (not in finite time) and the limit values do not in general provide the exact values of the asymptotic influence (that is, $H^\ell(\infty) \neq H(\ell)$).
We shall explore the issues of convergence time and of asymptotical error by simulations in Section~\ref{sec:simulations}, after proving Theorem~\ref{thm:conv-MPA-G} in the next section.

\begin{remark}[Relation with Vassio et al., 2014]
The algorithm \eqref{eq:MPA-update-1}-\eqref{eq:MPA-update-2} is an adaptation of the MPA proposed by~\cite{Vassio:2014:journal,Vassio:2014:ECC} to solve an equivalent problem (as we discussed above). The paper~\cite{Vassio:2014:journal} proves the convergence of the algorithm on graphs such that every node has the same degree (regular graphs). 
Our Theorem~\ref{thm:conv-MPA-G} immediately extends the proof of convergence of the MPA in~\cite{Vassio:2014:journal} to every connected graph, provided the set $S^0$ is non-empty.
Indeed, in the MPA in~\cite{Vassio:2014:journal} the messages flowing out the nodes in $S^0$ are always zero. The nodes in $S^0$ can be pruned from that graph and, correspondingly, such a fixed (and null) message contribution shall be constantly added to the computations carried by the neighbouring nodes. In our setting this corresponds to properly choosing the value of the vector $\qq$.
\end{remark}

\section{Convergence proof}\label{sect:convergence}
This section is devoted to the proof of Theorem~\ref{thm:conv-MPA-G}. To this goal, we shall first study a related non-linear dynamics on directed graphs and prove its convergence in Subsection~\ref{subsec:conv-MPA-like-D}. Then, in Section~\ref{subsec:proof-conv-MPA-G} we recognise that the MPA can be mapped into a special case of this dynamic and thus prove its convergence.

\subsection{Convergence of a MPA-like dynamics on digraphs}\label{subsec:conv-MPA-like-D}  

In this subsection we introduce the \textit{directed} graphs, define a suitable nonlinear dynamic (that generalizes \eqref{eq:MPA-update-1}-\eqref{eq:MPA-update-2} in a certain sense) and prove under which conditions it converges on any digraph. The proof is straightforward for acyclic graphs but more involved for graphs that contain strongly connected components. 

A \textit{directed} graph or \textit{digraph} is a pair $\DD = (V, \Phi )$ where $V$ is the set of vertices and $\Phi \subseteq V \times V$ is the set of \textit{arcs}, that are ordered pairs of vertices. The sub-digraph induced by $U\subseteq V$ is $\DD[U] = (U, \Phi \cap U\times U)$. 
 A node $v$ is a \textit{sink} if $(v,w) \notin \Phi$ for any $w \in V$. An arc of the form $(v,v)$  
is a \textit{self-loop}. 
A \textit{path from $v$ to $w$} on $\DD$, of length $l$, is an ordered list of nodes $(u_0, u_i, \ldots, u_l)$ such that 
\begin{enumerate}
\item[(i)] $u_0 = v$ and $u_l = w$;
\item[(ii)] $(u_{i-1},u_{i}) \in \Phi$ for every $i \in \{1,\ldots, l\}$. 
\end{enumerate}
A path is \textit{simple} if no arc is repeated.  
A node $w$ is \textit{reachable} from $v$ if there exists a path of length $l \ge 0$ from $v$ to $w$.

A digraph $\DD = (V, \Phi )$ 
is termed \textit{strongly connected} if for every pair of nodes $v,w \in V$, there is a path from $v$ to $w$ and a path from $w$ to $v$. 
If $\DD$ is not strongly connected, let $U\subset V$. The induced sub-digraph $\DD[U]$ is a \textit{strongly connected component} of $\DD$ if $\DD[U]$ is strongly connected but $\DD[U\cup\{v\}]$ is not, for any $v \in V\setminus U$.
A strongly connected component $\DD[U]$ is \textit{trivial} if it contains a single node without a self-loop, i.e. $\DD[U] = (\{u\},\emptyset)$. 
Otherwise it is \textit{non-trivial}.  
The digraph $\DD$  
is \textit{acyclic} if all its strongly connected component are trivial.  
We term \textit{acyclic ordering} a relabeling $x_1, x_2, \ldots, x_{|V|}$  of the vertices of $\DD$ such that for every arc $(x_i,x_j) \in \Phi$ it holds $j < i$. Any acyclic digraph admits an acyclic ordering~\cite[Prop 2.1.3]{BJG:2009}.

Given any digraph $\DD = (V,\Phi)$, consider all its strongly connected components $\DD_k = (V_k, \Phi_k)$, $k \in \{1,\ldots,s\}$.
The \textit{condensation digraph} $\CC_\DD$ of $\DD$ is a digraph with vertex set $\{1,\ldots,s\}$ such that there is an arc from $h$ to $k$ if  there is an arc in $\DD$ from a node in $V_h$ to a node in $V_k$ and $k \neq h $. It is easy to check that $\CC_\DD$ is acyclic.

We introduce now the non-linear dynamic of interest, on the digraph $\DD = (V, \Phi )$. 

Let two sequences of non-negative vectors $\aalpha(t)$, $\bbeta(t) \in [0,+\infty)^V$, and two positive vectors $\rr$, $\ss \in (0,+\infty)^V$ be given, which satisfy the following condition.
\begin{assumption}\label{ass:abrs} 
The vectorial sequence $\aalpha(t)$ is non-decreasing in every component and $\bbeta(t) $ is convergent.
The vectors $\rr$ and $\ss$ are such that $r_v = s_v^{-1}$ for every $v\in V$.
\eabullet\end{assumption}
This assumption will hold in the rest of this subsection.

Let $M \in \{0,1\}^{V\times V}$ be the adjacency matrix of the digraph $\DD$, i.e. $M_{vw} = 1$ if and only if $ (v,w) \in \Phi$. Let $W \in [0,+\infty)^{V\times V}$ be a non-negative matrix such that:
 $$W_{vw} = r_v  M_{vw} s_w\,.$$ 
Consider two vector sequences $\oomega(t) \in (0,1]^V$ and $\eeta(t) \in [1,+\infty)^V$ of initial value $\oomega(0) = \eeta(0) = \ones$. 
The dynamic is given by the update rules: 
\begin{align}
	\omega_v(t+1) &= \frac{1}{1 + \alpha_v(t) + \sum_w W_{vw}\, (1 - \omega_w(t))}\,, \label{eq:omega-update} \\[2mm]
	\eta_v(t+1) &= 1 + \beta_v(t) + \textstyle{ \sum_w} M_{vw} \,\omega_w(t) \,\eta_w(t) \,, \label{eq:eta-update}
\end{align}
for every $v\in V$ and $t\ge 0$.

The sequences $\oomega(t)$ and $\eeta(t)$ converge in any acyclic digraph, because the pattern of interdependencies among the components of the sequences follow an acyclic order.  
\begin{lemma}[Convergence--Acyclic digraphs]\label{lem:D-acyclic}  
If the digraph $\DD=(V,\Phi)$ is acyclic, then the sequence $\eeta(t)$ is convergent and 
the sequence $\oomega(t)$ is non-increasing in every component and convergent. Moreover, $\lim_{ t\to +\infty} \omega_v(t) < 1$ 
if and only if there exists $w$ reachable from $v$ such that $\alpha_w(t)$ is non identically zero. 
\end{lemma}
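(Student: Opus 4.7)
The plan is to exploit the acyclic structure of $\DD$ and argue by induction on an acyclic ordering. Since $\DD$ is acyclic, there is a relabeling $v_1,\dots,v_n$ of $V$ such that every arc $(v_i,v_j)\in\Phi$ satisfies $j<i$. Consequently the update \eqref{eq:omega-update} of $\omega_{v_i}(t+1)$ involves only the values $\omega_w(t)$ with $w$ an out-neighbour of $v_i$, all of which have strictly smaller index; the same holds for \eqref{eq:eta-update}. The full dynamics therefore decouples as a triangular cascade, and I would prove the three claims of the lemma simultaneously by induction on $i$.

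For the base case, $v_1$ is a sink, so the sums in \eqref{eq:omega-update}--\eqref{eq:eta-update} are empty. One gets $\omega_{v_1}(t+1)=1/(1+\alpha_{v_1}(t))$ and $\eta_{v_1}(t+1)=1+\beta_{v_1}(t)$. Because $\alpha_{v_1}(t)$ is non-decreasing and non-negative by Assumption~\ref{ass:abrs}, $\omega_{v_1}(\cdot)$ is non-increasing from $t=0$, where $\omega_{v_1}(0)=1$, and converges by monotone convergence; $\eta_{v_1}(t)$ converges because $\beta_{v_1}(t)$ does. Moreover $\lim_t\omega_{v_1}(t)<1$ if and only if $\lim_t\alpha_{v_1}(t)>0$, i.e.\ if and only if $\alpha_{v_1}\not\equiv 0$, which matches the statement because the only node reachable from $v_1$ is $v_1$ itself.

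For the inductive step, assume the three claims for every index smaller than $i$. Each out-neighbour $w$ of $v_i$ has smaller index, so by hypothesis $\omega_w(t)\in(0,1]$ is non-increasing and convergent, and $\eta_w(t)$ is convergent. The denominator $1+\alpha_{v_i}(t)+\sum_w W_{v_iw}(1-\omega_w(t))$ in \eqref{eq:omega-update} is then non-decreasing in $t$, since $\alpha_{v_i}(t)$ is non-decreasing and each $1-\omega_w(t)$ is non-decreasing with $W_{v_iw}\ge 0$; it is also bounded below by $1$, and equals $1+\alpha_{v_i}(0)$ at $t=0$, so $\omega_{v_i}(\cdot)$ is non-increasing from $\omega_{v_i}(0)=1$ and hence converges. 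For $\eta_{v_i}$, each summand $M_{v_iw}\omega_w(t)\eta_w(t)$ converges by the inductive hypothesis, hence so does $\eta_{v_i}(t+1)=1+\beta_{v_i}(t)+\sum_w M_{v_iw}\omega_w(t)\eta_w(t)$. For the characterisation, passing to the limit in \eqref{eq:omega-update} and recalling that $r_v,s_w>0$ (so $W_{v_iw}>0$ exactly when $(v_i,w)\in\Phi$), one has $\lim_t\omega_{v_i}(t)<1$ if and only if either $\alpha_{v_i}\not\equiv 0$ or some out-neighbour $w$ of $v_i$ satisfies $\lim_t\omega_w(t)<1$. By the inductive hypothesis the latter means some node reachable from $w$ has $\alpha\not\equiv 0$; together with the first case, in which $v_i$ itself is a reachable witness, this is precisely the existence of a node reachable from $v_i$ with $\alpha\not\equiv 0$.

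The whole argument reduces to a triangular cascade of monotone-convergence and algebraic-limit invocations, so the main obstacle for this lemma is essentially just bookkeeping. The genuine difficulty lies in the cyclic case still to come, when non-trivial strongly connected components must be handled and this acyclic-ordering trick becomes unavailable.
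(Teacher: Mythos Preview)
Your proof is correct and follows essentially the same approach as the paper: induction along an acyclic ordering, using monotonicity of the denominator in \eqref{eq:omega-update} (via non-decreasing $\alpha_{v_i}(t)$ and non-increasing $\omega_w(t)$ for out-neighbours $w$) to get $\omega_{v_i}$ non-increasing, convergence of the summands to get $\eta_{v_i}$ convergent, and the reachability characterisation via the inductive hypothesis on out-neighbours. The only cosmetic difference is that the paper first isolates the set of sinks and then extends the acyclic ordering, whereas you fold the sink case into the base of the induction; the content is the same.
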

\begin{proof} 
Let the subset $S\subseteq V$ contain the sink nodes of the digraph $\DD$. Since $\DD$ is acyclic, $S$ is non-empty~\cite[Prop 2.1.1]{BJG:2009}. 
If $v \in S$, then $M_{vw}=W_{vw}=0$ for every $w$. For any $v \in S$, the update rules \eqref{eq:omega-update} and \eqref{eq:eta-update} simplify to  $\omega_v(t+1) = \frac{1}{1 + \alpha_v(t)} $ and $\eta_v(t+1) = 1+ \beta_v(t)$. 
 Given Assumption~\ref{ass:abrs}, 
 the sequence $w_v(t)$ is non-increasing  while $\eta_v(t)$ converges.  
Moreover, $\lim \omega_w(t) < 1$ if and only if $\alpha_v(t)$ is non identically zero. 

If $V\setminus S$ is non-empty, i.e. there are non-sink nodes,
we introduce an acyclic ordering $x_1, x_2, \ldots, x_{|V|}$ on $V$ such that $\{ x_1,\ldots, x_{|S|}\} \equiv S$, and  proceed by induction. Let $k \geq 2 $ and assume that, for all $i < k$, $\omega_{x_i}(t)$ is non-increasing while $\eta_{x_i}(t)$ converges. 
Moreover, assume that $\lim \omega_{x_i}(t) <1$ if and only if there exists a $x_j$  reachable from $x_i$ (where $j\leq i$) such that $\alpha_{x_j}(t)$ is non identically zero.  
Since 
$W_{x_k x_i}=0 $  
for any $i \ge k$, the update law \eqref{eq:omega-update} of $\omega_{x_k}(t)$ is equivalent to:
$$\omega_{x_{k}}(t+1) = \frac{1}{1 + \alpha_{x_{k}}(t) + \sum_{i < k} W_{{x_{k}}x_i}\, (1 - \omega_{x_i}(t))}\,.$$
The denominator is the sum of non-decreasing terms, thus $\omega_{x_k}(t)$ is non-increasing and always belongs to $(0,1]$. 
Moreover, $\lim \omega_{x_k}(t) <1$ iff $\alpha_{x_k}(t)$ is non identically zero or there exists $W_{{x_{k}}x_i} >0$ and $\lim \omega_{x_i}(t) <1$. 
Hence, $\lim \omega_{x_k}(t) <1$ iff there exist $x_j$ reachable from $x_k$ and $\alpha_{x_k}(t)$ is non identically zero.

The update law \eqref{eq:eta-update} for $\eta_{x_k}(t)$ simplifies to:
$$\eta_{x_k}(t+1) = 1 + \beta_{x_k}(t) + \textstyle{ \sum_{i<k}} M_{x_k x_i} \,\omega_{x_i}(t) \,\eta_{x_i}(t).$$ 
Similarly, $\eta_{x_k}(t)$ converges because its terms are convergent sequences. 
Then, by induction, the sequences $\oomega(t)$ and $\eeta(t)$ converge. 
\end{proof}

The absence of cycles is not necessary for convergence. The following result shows that, if $\DD$ is strongly connected, then the sequence $\oomega(t)$ also converges. However, in order to prove the convergence of $\eeta(t)$, we will additionally need the presence of a node $w$ where $\alpha_w(t)$ is not identically zero.

\begin{lemma}[Convergence--Strongly connected graphs]\label{lem:D-strongly} 
If the digraph $\DD = (V,\Phi)$ is strongly connected and there exists $v$ such that $\alpha_v(t)$ is not identically zero, then the sequences $\oomega(t)$ and $\eeta(t)$ converge. Moreover, for every $u \in V$ the sequence $\omega_u(t)$ is non-increasing and has limit $\omega_u(\infty)<1$. 
\end{lemma}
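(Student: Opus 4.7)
The plan is to handle $\oomega$ first (mirroring Lemma~\ref{lem:D-acyclic}) and then treat $\eeta$ as a linear iteration whose contraction is controlled by the spectral radius of the matrix $P(\infty)$ with entries $M_{vw}\omega_w(\infty)$. For $\oomega$: an induction on $t$ shows that $\omega_v(t)$ is non-increasing, because the denominator in \eqref{eq:omega-update} is non-decreasing in $t$ (using that $\aalpha(t)$ is non-decreasing and $\omega_w(t)$ is non-increasing). Monotonicity plus $\omega_v(t)\in(0,1]$ yields convergence to some $\omega_v(\infty)\in[0,1]$. To rule out $\omega_v(\infty)=1$, I argue by contradiction: if $\omega_v(\infty)=1$ for some $v$, then the denominator in \eqref{eq:omega-update} must tend to $1$, forcing both $\alpha_v(\infty)=0$ and $\omega_w(\infty)=1$ for every out-neighbour $w$ of $v$; iterating, $\omega_u(\infty)=1$ for every $u$ reachable from $v$. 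By strong connectivity this includes the node $v^*$ where $\alpha_{v^*}(t)$ is non-identically zero, but at the first time $t_0$ with $\alpha_{v^*}(t_0)>0$ one gets $\omega_{v^*}(t_0+1)\le 1/(1+\alpha_{v^*}(t_0))<1$, contradicting monotonicity.

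For $\eeta$ I would rewrite \eqref{eq:eta-update} as the linear iteration $\eeta(t+1)=P(t)\,\eeta(t)+\ones+\bbeta(t)$ with $P(t)_{vw}=M_{vw}\omega_w(t)\to P(\infty)_{vw}=M_{vw}\omega_w(\infty)$ and $\ones+\bbeta(t)\to\ones+\bbeta(\infty)$. Convergence of $\eeta(t)$ to $\eeta(\infty)=(I-P(\infty))^{-1}(\ones+\bbeta(\infty))$ then follows from $\rho(P(\infty))<1$ by standard stability arguments for asymptotically time-invariant linear recursions. To bound $\rho(P(\infty))$, the similarity $\diag(\rr)\,P(\infty)\,\diag(\rr)^{-1}$ has entries $W_{vw}\omega_w(\infty)$ (using $r_v s_v=1$), so $\rho(P(\infty))=\rho(\hat{P})$ with $\hat{P}:=W\diag(\oomega(\infty))$. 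Assume first that $\omega_v(\infty)>0$ for every $v$. Then $\hat{P}$ is irreducible (it inherits the strong connectivity of $M$), and Perron--Frobenius gives a strictly positive left eigenvector $\pi>0$ with $\pi^\top\hat{P}=\rho\,\pi^\top$. Choose the strictly positive test vector $x_v:=(1-\omega_v(\infty))/\omega_v(\infty)$ and exploit the fixed-point identity $1/\omega_v(\infty)=1+\alpha_v(\infty)+\sum_w W_{vw}(1-\omega_w(\infty))$ to obtain the clean computation $(\hat{P}x)_v=\sum_w W_{vw}(1-\omega_w(\infty))=x_v-\alpha_v(\infty)$. Pairing with $\pi$ in two ways yields $\rho\,\pi^\top x=\pi^\top x-\pi^\top\aalpha(\infty)$, hence $\rho=1-\pi^\top\aalpha(\infty)/\pi^\top x<1$ because $\pi$ and $x$ are strictly positive and $\alpha_{v^*}(\infty)>0$.

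The main obstacle is the degenerate sub-case $V_0:=\{v:\omega_v(\infty)=0\}\neq\emptyset$, where $x_v$ above blows up. Since $\hat{P}$ then has zero columns on $V_0$, its nonzero spectrum coincides with that of the principal submatrix $\hat{P}_{V_+,V_+}$, where $V_+:=V\setminus V_0$. The same test vector (restricted to $V_+$) satisfies $(\hat{P}_{V_+,V_+}\,x)_v=x_v-\gamma_v$ with $\gamma_v:=\alpha_v(\infty)+\sum_{w\in V_0}W_{vw}\ge 0$. Strong connectivity of $\DD$ guarantees that from every $v\in V_+$ there is a $\DD$-path to $v^*$; either the path remains in $V_+$ (so it terminates at $v^*$ with $\gamma_{v^*}\ge\alpha_{v^*}(\infty)>0$) or it first leaves $V_+$ at some last node $v'\in V_+$ whose out-arc enters $V_0$, so $\gamma_{v'}>0$. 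In either case $(\hat{P}_{V_+,V_+}^{\,n}x)_v<x_v$ strictly for some $n\le|V_+|$, and a weighted sup-norm argument with $\|y\|_x=\max_v|y_v|/x_v$ gives $\|\hat{P}_{V_+,V_+}^{\,n}\|_x<1$, hence $\rho(\hat{P}_{V_+,V_+})<1$. Combined with the generic case, $\rho(P(\infty))<1$ always holds under the standing hypotheses, which closes the argument.
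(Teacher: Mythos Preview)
Your proof is correct and follows the paper's strategy: monotonicity of $\oomega(t)$ with limits strictly below $1$, then the fixed-point identity rewritten as a sub-invariance relation to force $\rho(M\diag(\bar\oomega))<1$, hence convergence of the affine $\eeta$-recursion. The execution differs only in two minor ways. For monotonicity and $\omega_v(\infty)<1$ you use a direct induction plus a reachability contradiction; the paper instead tracks the growing set $R_t=\{v:\omega_v(t)<1\}$ until it fills $V$. For the spectral bound you pair $\hat P x=x-\aalpha(\infty)$ with the Perron left eigenvector and read off $\rho=1-\pi^\top\aalpha(\infty)/\pi^\top x<1$; the paper derives the equivalent identity $B\xx=\xx-\cc$ with $B=\diag(\bar\oomega)M$, $x_v=s_v(1-\bar\omega_v)$, $c_v=\bar\alpha_v s_v\bar\omega_v$, and then iterates to $B^{|V|}\xx=\xx-\sum_{i<|V|}B^i\cc\prec\xx$ componentwise via irreducibility---which is essentially your own degenerate-case argument applied to the full matrix. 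That degenerate case $V_0\neq\emptyset$ is in fact unnecessary here: in every use of the lemma $\aalpha(t)$ is bounded, so the denominator in \eqref{eq:omega-update} stays bounded and $\bar\omega_v>0$, a fact the paper's proof takes for granted.
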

\begin{proof} First, we show that $\oomega(t)$ converges to a limit, with every component strictly smaller than 1. Using  the implicit form of the limit we show that the matrix $M \diag(\oomega(t))$ is eventually Shur stable and conclude that  $\eeta(t)$ converges.

Given the assumptions on $\aalpha(t)$, there exists $s \ge 0$ and $v\in V$ such that $\aalpha(t)=\0$ for $t < s$ whereas $\alpha_v(s) > 0$. 
Hence, $\oomega(t) = \1$ for $t \leq s$ whereas $\oomega(s+1) \vecl \1 $, because $\omega_v(s+1) <1$.
We introduce the subset: 
$$R_t :=\{v:\omega_v(t) <1 \}\,, $$
and proceed by induction. Let $t \geq s+1$ and assume that $\oomega(t) \vecl \oomega(t-1)$ and that, unless $R_{t-1} \equiv V$,  $R_{t}$ is a proper superset of $R_{t-1}$.
There are two cases: if $R_t \neq V$, thanks to the strong connectivity, it is always possible to find a pair of nodes $v,w$ such that $v \notin R_t$, $w \in R_t$ and $(v,w) \in \Phi$. 
Then $\omega_v(t+1) <1$  so $v \in R_{t+1}$ and $\oomega(t+1) \vecl \oomega(t)$.

If $R_t = V$,  there is a $w'$ such that $\omega_{w'}(t) < \omega_{w'}(t-1)$ by hypothesis and it always exists a node $v$ such that $(v,w') \in \Phi$, thanks to the strong connectivity. For that $v$ it holds:
\begin{align*}
	\omega_v(t\!&+\!1) = \frac{1}{1 + \alpha_v(t) + \sum_w W_{vw}\, (1\! -\! \omega_w(t))} \\[2mm]
	&< \frac{1}{1 + \alpha_v(t\!-\!1) + \sum_w W_{vw}\, (1\! -\! \omega_w(t\!-\!1))} = \omega_v(t)\,,
\end{align*}
because $\alpha_v(t)$ is non decreasing, $\omega_{w}(t) \leq \omega_{w}(t-1)$ for every $w$ and there exist $w'$ such that $\omega_{w'}(t) < \omega_{w'}(t-1)$ and $W_{vw'}>0$. 
This completes the induction because $R_{s+1} \neq \emptyset = R_s$. 
Therefore, for every $v \in V$ the sequence $\omega_v(t)$ is non-increasing and non identically one, hence it  
admits a limit of value $ \bar\omega_v :=\lim_{t\to +\infty} \omega_v(t) \in (0,1)$.

We now prove that $M \diag(\oomega(t))$ is eventually Shur stable.
By hypothesis, the sequence $\aalpha(t)$ admits a limit $\bar \aalpha \vecg \zeros$. 
The limit $\bar \oomega$ of the recursion \eqref{eq:omega-update} solves, within $(0,1)^V$, the non-linear system:
\begin{align}
\bar \omega_v &= \frac{1}{1 + \bar\alpha_v + \sum_w r_v  M_{vw} s_w \, (1 - \bar\omega_w)}\qquad \forall v \in V\,,\label{eq:omega-bar}
\end{align}
where we used $W_{vw} = r_v  M_{vw} s_w$.
Since the denominators are positive, we rewrite \eqref{eq:omega-bar}  as:
$$\bar \omega_v \left( 1 + \bar\alpha_v + \textstyle{\sum_w} r_v  M_{vw} s_w\, (1 - \bar\omega_w) \right) = 1 \qquad \forall v\in V\,,$$ 
or equivalently: 
$$r_v \textstyle{\sum_w} \bar \omega_v\, M_{vw}s_w\, (1 - \bar\omega_w) = 1 - \bar \omega_v - \bar\alpha_v\, \bar \omega_v \quad \forall v \in V\,,
$$ 
which by Assumption~\ref{ass:abrs} (i.e. $r_v = s_v^{-1}$, $\forall v$) becomes: 
$$ \textstyle{\sum_w} \bar \omega_v\, M_{vw}s_w\, (1 - \bar\omega_w) = s_v(1 - \bar \omega_v) - \bar\alpha_v\, s_v\, \bar \omega_v \quad \forall v \in V\,.
$$ 
By the change of variables 
 $x_v := s_v(1 - \bar\omega_v)$, $c_v := \bar\alpha_v\, s_v \, \bar \omega_v$ and $B_{vw}:= \omega_v\, M_{vw}$ we  obtain: $$\textstyle{\sum_w} B_{vw}\, x_w = x_v - c_v  \qquad \forall v\in V\,,$$ that in vectorial form reads:
\begin{align} \label{eq:eig-B} B \xx = \xx - \cc \,.\end{align} 
In the ``eigenvalue-like'' expression \eqref{eq:eig-B}, the matrix $B = \diag(\bar \oomega) M $ is non-negative and irreducible: every component of $\bar \oomega$ is positive and $M$ is a non-negative matrix adapted to a strongly connected graph, so it is non-negative and irreducible.
Every component of $\xx$ is positive and $\cc \vecg \zeros$, because every component of $\bar \oomega$ belongs to $(0,1)$,  $\bar \alpha \vecg \zeros$, and every component of $\ss$ is positive.

If we multiply \eqref{eq:eig-B} on the left by $B^{|V|-1}$ and iteratively reuse \eqref{eq:eig-B}, we obtain:
$$B^{|V|} \xx = \xx - \textstyle{\sum_{i=0}^{|V|-1}} B^i \, \cc \,.$$
Every element of the matrix  $\sum_{i=0}^{|V|-1} B^i$ is positive, because $B$ is non-negative and irreducible~\cite[Corollary on p.~52]{frG:1959:v2}. 
Therefore, every component of $\textstyle{\sum_{i=0}^{|V|-1}} B^i \, \cc$ is positive and: 
$$( B^{|V|} \xx)_v < x_v  \qquad \forall v\in V\,, $$  
which implies that the spectral radius of $B^{|V|}$ is strictly smaller than one~\cite[Lemma~34.7]{lS:1993:lezioni}, i.e. $\rho(B^{|V|})<1$. Thus, $\rho(B) <1$ and since $B = \diag(\bar\oomega) M$ and $M \diag(\bar\oomega)$ have the same eigenvalues:
$$\rho\left(M \diag(\bar\oomega)\right) <1\,. $$ 
The matrix $M \diag(\oomega(t))$ converges to $M \diag(\bar\oomega)$ hence its spectral radius is eventually smaller than one.

We conclude the proof observing that $\eeta(t)$ converges. In vectorial form, the update law \eqref{eq:eta-update} reads: 
$$\eeta(t+1) = \ones + \bbeta(t) + M \,\diag(\oomega(t))  \,\eeta(t) \,,$$
where the sequences $\bbeta(t)$ converge.
\end{proof}

If $\DD$ is strongly connected, the presence of at least a node $v$ where $\alpha_v(t)$ is non identically zero is necessary. Else $\oomega(t) = \ones$ for every $t\geq 0$, $\rho(M\diag(\oomega(t))) = \rho(M) \geq 1$ since $M$ is irreducible and  $\eeta(t)$ will grow unboundedly.



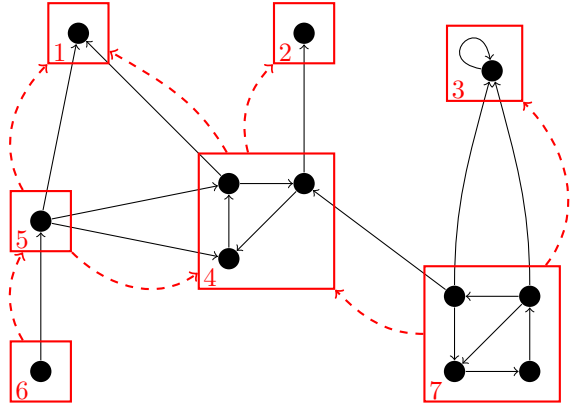
\begin{figure}\centering
\begin{tikzpicture}[scale = 1]

	\tikzstyle{D node} = [shape = circle, fill=black , minimum size = 1mm, inner sep = 1mm];
	\tikzstyle{C node} = [shape = rectangle, draw, red, thick , minimum size = 4mm, inner sep = 4mm];
	\tikzstyle{C label} = [shape = rectangle, red, thick , minimum size = 2mm, inner sep = 2mm];
	
	
	\node[D node] (D1) at (0.5,0.5) {};
	\node[D node] (D2) at (1,5) {};	
	\node[D node] (D3) at (.5,2.5) {};
	\node[D node] (D4) at (3,3) {};
	\node[D node] (D5) at (3,2) {};
	\node[D node] (D6) at (4,3) {};
	\node[D node] (D7) at (6,.5) {};
	\node[D node] (D8) at (6,1.5) {};
	\node[D node] (D9) at (7,.5) {};
	\node[D node] (D10) at (7,1.5) {};
	\node[D node] (D11) at (6.5,4.5) {};
	\node[D node] (D12) at (4,5) {};
	
	\path (D11) edge [->, loop, out = 170, in = 100, looseness = 10] (D11);
	\path (D1) edge [->] (D3);
	\path (D3) edge [->] (D2);
	\path (D3) edge [->] (D5);
	\path (D3) edge [->] (D4);
	\path (D5) edge [->] (D4);
	\path (D4) edge [->] (D6);
	\path (D6) edge [->] (D5);
	\path (D6) edge [->] (D12);
	\path (D4) edge [->] (D2);
	\path (D8) edge [->] (D6);
	\path (D8) edge [->, out = 90, in=-105](D11);
	\path (D10) edge [->,out = 90, in=-75] (D11);
	\path (D10) edge [->] (D8);
	\path (D8) edge [->] (D7);
	\path (D7) edge [->] (D9);
	\path (D9) edge [->] (D10);
	\path (D10) edge [->] (D7);
	
	\node[C node] (C1) at (D1) {};
	\node[C node] (C3) at (D3) {};
	\node[C node] (C4) at (D12) {};
	\node[C node, minimum size = 5mm, inner sep = 5mm] (C5) at (6.4,4.6) {};
	\node[C node] (C2) at  (D2) {};
	\node[C node, minimum size = 9mm, inner sep = 9mm] (C6) at (3.5,2.5) {};
	\node[C node, minimum size = 9mm, inner sep = 9mm] (C7) at (6.5,1) {};
	
	\path (C1) edge [->, red, thick, dashed, out = 120, in = -120] (C3);	
	\path (C3) edge [->, red, thick, dashed, out = 120, in = 225] (C2);	
	\path (C3) edge [->, red, thick, dashed, out = -45, in = 217] (C6);	
	\path (C7) edge [->, red, thick, dashed , out = 180, in = -45] (C6);	
	\path (C7) edge [->, red, thick, dashed, out = 52, in = -45] (C5);	
	\path (C6) edge [->, red, thick, dashed, out = 105, in = 225] (C4);
	\path (C6) edge [->, red, thick, dashed, out = 120, in = -30] (C2);		
	
	\node[C label] at (0.75, 4.75)  {\small$1$};
	\node[C label] at (3.75,4.75)  {\small$2$};
	\node[C label] at (6.05,4.25)  {\small$3$};
	\node[C label] at (2.75,1.75)  {\small$4$};
	\node[C label] at (0.25,2.25)  {\small$5$};
	\node[C label] at (0.25,0.25)  {\small$6$};
	\node[C label] at (5.75,0.25)  {\small$7$};

\end{tikzpicture}
\caption{A digraph $\DD = (V,\Phi)$ and its  condensation digraph $\CC_\DD$.  The digraph $\DD$ has black round nodes and thin arcs. The condensation digraph $\CC_\DD$ has box nodes and dashed edges. The numbers in the nodes of $\CC_\DD$ form an acyclic order. \label{fig:cond-graph}}
\end{figure}

Given a digraph $\DD$, the sequences $\oomega(t)$ and $\eeta(t)$ converge provided that, for any node in a strongly connected component, there exists a reachable $w$ such that $\alpha_w(t)$ is non identically zero. 
To prove the statement, we consider the condensation graph $\CC_\DD$ of $\DD$ and fix an acyclic order on it, as in Figure~\ref{fig:cond-graph}. In any strongly connected component (trivial or not), the dynamic will converge following the acyclic order, similarly to the acyclic case. 
The sequences $\aalpha(t)$ and $\bbeta(t)$, introduced to define the dynamic in \eqref{eq:omega-update} and \eqref{eq:eta-update}, serve to ``connect'' the different components.

\begin{theorem}[Convergence--General graphs] \label{thm:conv-D-general}
Consider dynamics \eqref{eq:omega-update}-\eqref{eq:eta-update} on digraph $\DD = (V, \Phi)$ and recall that Assumption~\ref{ass:abrs} holds.
Assume that, for every node $v$ that belongs to a non-trivial strongly connected component of $\DD$, there exists a node $w$ reachable from $v$ such that the sequence $\alpha_w(t)$ is non identically zero. 
Then, the sequence $\eeta(t)$ converges and the sequence $\oomega(t)$ converges and is non-increasing in every component.
Moreover, 
$\lim_{ t\to +\infty} \omega_v(t) <1$ for every node $v$ such that there exists a $w$ reachable from $v$ and $\alpha_w(t)$ is not identically zero. 
\end{theorem}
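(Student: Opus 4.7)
The plan is to reduce the general case to the two cases already handled in Lemma~\ref{lem:D-acyclic} and Lemma~\ref{lem:D-strongly} by inducting along the condensation digraph $\CC_\DD$, as the figure suggests. Since $\CC_\DD$ is acyclic, I would fix an acyclic ordering $\DD_1,\dots,\DD_s$ of its strongly connected components (so every arc of $\CC_\DD$ goes from a higher to a lower index), and process the components in increasing $k$. Under this ordering, the update rule for $\omega_v(t+1)$ and $\eta_v(t+1)$ at a node $v\in V_k$ depends only on $w\in V_k$ and on $w\in V_h$ with $h<k$, which matches the induction direction.

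At the inductive step, the idea is to regard $\DD[V_k]$ as an isolated subsystem by absorbing all ``incoming'' contributions from components $V_h$ with $h<k$ into new driving sequences. Concretely, I would define
\[
\tilde\alpha_v(t) := \alpha_v(t) + \!\!\sum_{w\notin V_k}\!\! W_{vw}\,(1-\omega_w(t)),\qquad
\tilde\beta_v(t) := \beta_v(t) + \!\!\sum_{w\notin V_k}\!\! M_{vw}\,\omega_w(t)\,\eta_w(t),
\]
for $v\in V_k$, so that the update rules for $v\in V_k$ become \eqref{eq:omega-update}--\eqref{eq:eta-update} restricted to $\DD[V_k]$ driven by $\tilde\aalpha(t)$, $\tilde\bbeta(t)$. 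By the inductive hypothesis, every $\omega_w(t)$ with $w\notin V_k$ is non-increasing and convergent and every $\eta_w(t)$ is convergent, so $\tilde\aalpha(t)$ is component-wise non-decreasing and $\tilde\bbeta(t)$ converges, i.e. Assumption~\ref{ass:abrs} is inherited for the restricted system. If $V_k$ is a trivial SCC with vertex $u$, the within-component sums vanish and $\omega_u(t+1) = 1/(1+\tilde\alpha_u(t))$, $\eta_u(t+1)=1+\tilde\beta_u(t)$, which settle convergence trivially and allow the iff statement on $\omega_u(\infty)<1$ to be read off from the definition of $\tilde\alpha_u$.

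The delicate case is a non-trivial SCC $V_k$: here I want to invoke Lemma~\ref{lem:D-strongly} on $\DD[V_k]$, and the nontrivial obligation is to verify that $\tilde\alpha_v(t)$ is non-identically zero for at least one $v\in V_k$. By the theorem's hypothesis, for any $v\in V_k$ there exists a node $w$ reachable from $v$ such that $\alpha_w(t)\not\equiv 0$. If $w\in V_k$ this is immediate; otherwise I follow a path from $v$ to $w$ and look at the first arc $(u_{j-1},u_j)$ leaving $V_k$, so that $u_{j-1}\in V_k$ and $u_j\in V_h$ with $h<k$. Since $w$ remains reachable from $u_j$ and $\alpha_w\not\equiv 0$, the inductive hypothesis gives $\omega_{u_j}(\infty)<1$; because $\omega_{u_j}(t)$ is non-increasing, $1-\omega_{u_j}(t)>0$ from some time onward, forcing $\tilde\alpha_{u_{j-1}}(t)\not\equiv 0$. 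Lemma~\ref{lem:D-strongly} then yields convergence of both $\oomega|_{V_k}$ (non-increasing, with limit strictly below $1$) and $\eeta|_{V_k}$.

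The main obstacle I anticipate is exactly this last verification: propagating the ``non-identically-zero'' information across SCC boundaries requires tracking the strict inequality $\omega_w(\infty)<1$ in the inductive hypothesis, not just convergence, which is why I would phrase the induction hypothesis to include both the convergence and the full iff characterization of $\{w : \omega_w(\infty)<1\}$ appearing in the theorem statement. Once the induction is set up with this stronger hypothesis, the case analysis (trivial SCC handled elementarily, non-trivial SCC handled by Lemma~\ref{lem:D-strongly}) closes cleanly, and assembling the per-component conclusions gives the global claims on $\oomega(t)$ and $\eeta(t)$.
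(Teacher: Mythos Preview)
Your proposal is correct and follows essentially the same route as the paper: induct along an acyclic ordering of the condensation digraph $\CC_\DD$, absorb the contributions from lower components into modified driving sequences (your $\tilde\aalpha,\tilde\bbeta$ are the paper's $\aalpha',\bbeta'$), verify that Assumption~\ref{ass:abrs} is inherited, and then dispatch each component either trivially or via Lemma~\ref{lem:D-strongly}. Your explicit path-tracing argument for why some $\tilde\alpha_v$ is non identically zero in a non-trivial SCC is in fact more detailed than what the paper writes out, and your observation that the induction hypothesis must carry the strict inequality $\omega_w(\infty)<1$ (not just convergence) is exactly the right bookkeeping.
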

\begin{proof} 
Consider the condensation graph $\CC_\DD$ of $\DD$. 
Let $\{1,2\ldots,s\}$ be the vertex set of $\CC_\DD$ and assign the nodes' label to form an   acyclic order on $\CC_\DD$ where the smallest number are reserved to sink nodes, c.f. Figure~\ref{fig:cond-graph}. 
Assume $k$ is the node of $\CC_\DD$ that represents the strongly connected component $\DD_k = (V_k,\Phi_k)$. For every $v \in V_k$, we rewrite the update laws \eqref{eq:omega-update} and \eqref{eq:eta-update} as: 
\begin{align}
	\omega_v(t+1) &= \frac{1}{1 + \alpha'_v(t) + \sum_{w\in V_k} W_{vw}\, (1 - \omega_w(t))}\,, \label{eq:omega-update-rew} \\[2mm]
	\eta_v(t+1) &= 1 + \beta'_v(t) + \textstyle{ \sum_{w \in V_k}} M_{vw} \,\omega_w(t) \,\eta_w(t) \,, \label{eq:eta-update-rew}
\intertext{ for every $t\ge 0$, where:}
	\alpha_v'(t) &:=  \alpha_v(t) + \textstyle{ \sum_{w \notin V_k}} W_{vw}\, (1 - \omega_w(t)) \,,  \label{eq:alpha-p} \\[2mm]
	\beta_v'(t) &:= \beta_v(t) +  \textstyle{ \sum_{w \notin V_k}} M_{vw} \,\omega_w(t) \,\eta_w(t) \label{eq:beta-p} \,.
\end{align}

Let $k$ be a sink node of $\CC_\DD$ (there must be at least one) and observe that $M_{v,w} = W_{v,w} = 0$, for any $v \in V_k$ and $w \notin V_k$. Hence, for any $v \in V_k$ and $t\geq 0$, $\alpha_v'(t) = \alpha_v(t)$ and $\eta'_v(t) = \eta_v(t)$ so the dynamics within the component $\DD_k$ is independent of any other component.
For any $v\in V_k$ the sequences $\omega_v(t)$ and $\eta_v(t)$ converge: if $\DD_k$ is a non-trivial strongly connected component, invoke Lemma~\ref{lem:D-strongly}; else $\DD_k = (\{v\},\emptyset)$ and it is sufficient to observe the expressions, similar to those in the proof of Lemma~\ref{lem:D-acyclic}. Moreover, $\omega_v(t)$ is non-increasing and $\lim \omega_v(t) <1$ if there is $w \in V_k$ such that $\alpha_w(t)$ is non identically zero.

Now, consider any non-sink node $k>1$ of $\CC_\DD$ and assume that the sequences $\omega_u(t)$ and $\eta_u(t)$ converge for any node $u \in V_h $ in any component $\DD_h$ where $h<k$. Assume moreover that $\lim \omega_u(t) <1$ if there exist $w$ reachable from $u$ such that $\alpha_w(t)$ is non identically zero. Let $v \in V_k$ and observe that the sequence $\alpha_v'(t)$ and $\eta_v'(t)$, defined in \eqref{eq:alpha-p} and \eqref{eq:beta-p}, only contain terms $\omega_u(t)$ and $\eta_u(t)$ where $u \in V_h$ for some $h <k$. Given these assumptions, $\alpha_v'(t)$ is non-decreasing, and is non identically zero if there exist a $w$ reachable (in $\DD$) from $v$ such that $\alpha_w(t)$ is non identically zero. 
Moreover $\beta_v'(t)$ converges.
Therefore, thanks to Lemma~\ref{lem:D-strongly} if $\DD_k$ is non-trivial or by observing the expressions if $\DD_k$ is trivial, the sequences $\omega_v(t)$ and $\eta_v(t)$ converge for any $v\in V_k$,  $\omega_v(t)$ is non-increasing and $\lim \omega_v(t) <1$ if there exists $w$ reachable in $\DD$ from $v$ such that $\alpha_w(t)$ is non identically zero. 
An induction on the remaining components of $\CC_\DD$ proves the claim.  
\end{proof}

\subsection{Convergence of the MPA on $\GG$}\label{subsec:proof-conv-MPA-G}

In this subsection we prove the convergence of the MPA on any graph $\GG = (I,E)$ under our standing 
Assumptions~\ref{ass:zero},~\ref{ass:Qspeciale}, and~\ref{ass:null-Z-bias}. 
First we introduce the \textit{message digraph} $\MM_\GG$ corresponding to the graph $\GG$, that describes the topology of the interdependencies between the messages of the MPA, and rewrite the MPA dynamic. Then we show that the assumptions of Lemma~\ref{lem:D-strongly} and Theorem~\ref{thm:conv-D-general} hold, so we can use the results of the previous section. Finally, we show that Assumption~\ref{ass:abrs} holds and prove Theorem~\ref{thm:conv-MPA-G}.

The \textit{message digraph} associated to the graph $\GG = (I,E)$ is the digraph $\MM_\GG = (\vec E, \Phi)$ whose node set $\vec E \subseteq I\times I $ contains the \textit{ordered} pairs of vertices in $I$ that share an edge in $\GG$:
$$\vec E := \{ ji : \{i,j\} \in E  \} \,.$$ 
We reserve the shorthand notation $ji := (j,i)$ for the elements of $\vec E$. 
The arc set of $\MM_\GG$ is the subset $\Phi \subseteq \vec E \times \vec E$ of ordered pairs of $\vec E$ defined by:
$$\Phi := \{(ji,hk) : \{i,j\}\text{~and~} \{k,h\} \in  E,~  i=h,~ j\neq k \} \,.$$
As a simple example, the message digraph $\MM_\GG$ of a line graph $\GG$ with three nodes is shown in Figure~\ref{fig:M-G-topology}: the figure also shows, more in general, how two edges of $\GG$ with a common node map into $\MM_\GG$.
A complete analysis of the topological properties of the message digraph $\MM_\GG = (\vec E, \Phi)$ is out of the scope of this paper.
We just observe that, if  $\GG = (I,E)$ is connected, $\MM_\GG$ enjoys the following properties: 
\begin{itemize}
\item if $\GG$ is a tree, i.e. $|E| = |I|-1$,  $\MM_\GG$ is acyclic;
\item if $\GG$ contains exactly one circuit, i.e. $|E| = |I|$,  $\MM_\GG$ contains exactly two non-trivial strongly connected components;
\item if $\GG$ contains more that one circuit, i.e. $|E| > |I|$,  $\MM_\GG$ contains exactly one non-trivial strongly connected components.
\end{itemize}

\begin{figure}\centering
\begin{tikzpicture}[scale = 1]

	\tikzstyle{D node} = [shape = circle, fill=black , minimum size = 5mm, inner sep = 0mm];
	\tikzstyle{G node} =  [shape = circle, draw, thick , minimum size = 8mm, inner sep = 0mm];


	
	\node at (0,4) {\Large$\mathcal{G}$};
	\node[G node] (j) at (1,4) {\Large$j$};
	\node[G node] (i) at (4,4) {\Large$i$};
	\node[G node] (k) at (7,4) {\Large$k$};
	
	\path (j) edge [thick] node[above] {\large$\{i,j\}$} (i);
	\path (i) edge [thick]  node[above] {\large$\{i,k\}$} (k);
	
	\node at (0,1.5) {\Large$\mathcal{M}_\mathcal{G}$};
	\node[D node] (ji) at (2.5,2) {\color{white}$ji$};
	\node[D node] (ij) at (2.5,1) {\color{white}$ij$};	
	\node[D node] (ik) at (5.5,2) {\color{white}$ik$};	
	\node[D node] (ki) at (5.5,1) {\color{white}$ki$};
	\path (ji) edge [->] node[above] {$(ji,ik)$} (ik);
	\path (ki) edge [->] node[above] {$(ki,ij)$} (ij);
	
\end{tikzpicture}
\caption{\label{fig:M-G-topology} A connected graph $\GG$ that contains 3 nodes and 2 edges (above) and the corresponding message digraph $\MM_\GG$ (below). If the above are just two edge, with a common node, of a larger graph $\GG$, the figure would represents in which elements and arcs of $\MM_\GG$ they transform. }
\end{figure}
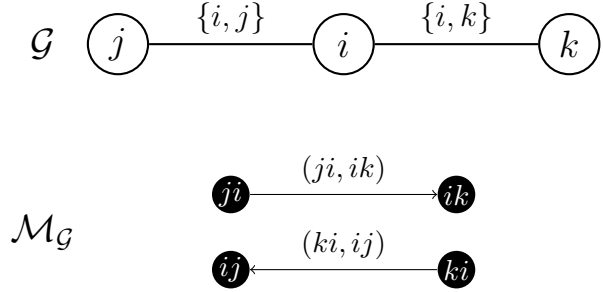

We associate the messages of the MPA to the nodes of $\MM_\GG$.
\begin{quote}
 The messages $\Wij(t)$ and $\Hij(t)$, i.e. those flowing in the edge $\{i,j\}$ of $\GG$ \textit{from $i$ to $j$}, are associated to the element $ji \in \vec E$.
\end{quote}
Following the MPA's update rules \eqref{eq:MPA-update-1} and \eqref{eq:MPA-update-2}, the messages $\Wij(t)$ and $\Hij(t)$ depend on the messages $\Wki(t)$ and $\Hki(t)$ where $k \in N_i \setminus \{ j\}$: the arc $(ji,ik) \in \Phi$ represent such dependence relation.
Note that $\MM_\GG$ does not contain arcs of the form $(ji,ij)$, nodes like $ii$, and self-loops like $(ji,ji)$.

\begin{lemma}\label{lem:G-gamma-ok-for-D-alpha}
Consider a connected graph $\GG=(I,E)$, a matrix $Q \in \Rp^{I\times I}$ adapted to it and a vector $\qq \in \Rp^I$ such that $\qq \vecg \0$. 
Consider the massage digraph $\MM_\GG = (\vec E, \Phi)$ and the vector $\aalpha \in \Rp^{\vec E}$ such that $ \alpha_{hk} = q_k / Q_{kh} $, $\forall \, hk \in \vec E $.
Then, for every $ji$ in a non-trivial strongly connected component of $\MM_\GG$, there exists $hk$ reachable from $ji$ such that $\alpha_{hk}>0$. 
\end{lemma}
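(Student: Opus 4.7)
The plan is to translate the statement into one about non-backtracking walks in $\GG$, and then exploit the connectivity of $\GG$ together with the hypothesis $\qq \vecg \0$. The key observation is that a directed path $ji \to \cdots \to hk$ in $\MM_\GG$ corresponds bijectively to a non-backtracking walk in $\GG$ of the form $j, i, \ldots, h, k$, because the definition of $\Phi$ encodes exactly the edge-sharing and the non-backtracking conditions. Thus, proving the lemma reduces to exhibiting, for every $ji$ in a non-trivial strongly connected component of $\MM_\GG$, a non-backtracking walk in $\GG$ that begins with the directed edge $j \to i$ and ends at some node $k$ with $q_k > 0$.

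I would first extract a closed non-backtracking walk from the SCC hypothesis. Since $\MM_\GG$ has no self-loops, a non-trivial strongly connected component containing $ji$ must contain a directed cycle through $ji$ of length at least two. Translating such a cycle back to $\GG$ produces a closed non-backtracking walk of the form $W: j = v_{-1}, i = v_0, v_1, \ldots, v_{L-1} = j, v_L = i$; let $U := \{v_{-1}, v_0, \ldots, v_{L-1}\} \subseteq I$ denote the set of nodes visited by $W$, so that $\{i,j\} \subseteq U$.

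Using $\qq \vecg \0$, I would then pick any $k^* \in I$ with $q_{k^*} > 0$ and split into two cases. If $k^* \in U$, truncating $W$ at the first occurrence of $k^*$ already yields a non-backtracking walk $j, i, \ldots, k^*$ in $\GG$, which corresponds to a path in $\MM_\GG$ from $ji$ to some $h^* k^* \in \vec E$ with $\alpha_{h^* k^*} = q_{k^*}/Q_{k^* h^*} > 0$. If $k^* \notin U$, I would invoke the connectivity of $\GG$ to fix a shortest path $P = (w_0, w_1, \ldots, w_m = k^*)$ from $U$ to $k^*$, choose an index $t_0$ with $v_{t_0} = w_0$, and form the concatenated walk $(v_{-1}, v_0, \ldots, v_{t_0}, w_1, \ldots, w_m)$ in $\GG$.

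The only delicate point, and the main obstacle of the proof, is verifying that this concatenation is non-backtracking at the junction $v_{t_0}$, i.e.\ that $v_{t_0 - 1} \neq w_1$. The choice of $P$ as a shortest path from $U$ secures this: by minimality all intermediate nodes $w_1, \ldots, w_m$ lie outside $U$, whereas $v_{t_0 - 1}$ belongs to $U$, forcing the inequality. Since the $W$-segment is non-backtracking by construction and $P$ is simple, the full walk is non-backtracking and ends at $k^*$, delivering the required reachable $h^* k^* = w_{m-1} w_m$.
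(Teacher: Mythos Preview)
Your argument is correct and complete in spirit, but it takes a different route from the paper's. The paper argues that a non-trivial SCC through $ji$ forces a circuit of $\GG$ through the edge $\{i,j\}$, hence $\GG-\{i,j\}$ remains connected; it then takes any simple path in $\GG-\{i,j\}$ from $i$ to a node $k$ with $q_k>0$ and prepends $j$. Because the first edge of that path cannot be $\{i,j\}$, the prepended walk is automatically non-backtracking, and no case split or junction check is needed. Your approach instead works directly with the closed non-backtracking walk $W$ extracted from the SCC: you either truncate $W$ if it already hits a node with positive $q$, or splice in a shortest path from the visited set $U$ to such a node, using minimality of the shortest path to rule out backtracking at the junction. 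Both arguments rest on the same two ingredients (the cycle coming from the SCC and the connectivity of $\GG$), but the paper's detour through $\GG-\{i,j\}$ buys a one-line finish, while your construction is more explicit about the non-backtracking structure and would generalise more readily if one needed to control the walk further.

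One small wording issue: in your Case~1, if $k^*=j$ then the literal ``first occurrence'' of $k^*$ in $W$ is at index $-1$, and truncating there gives only the single vertex $j$, not a walk of the form $j,i,\ldots,k^*$. You should truncate at the first occurrence with index $\ge 0$; since $v_{L-1}=j$, such an index always exists, so the fix is immediate.
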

\begin{proof} 
Let $ji$ be one node of a non-trivial strongly connected component of $\MM_\GG$.
If $q_i>0$, $\alpha_{ji}>0$ and we are done, so assume $q_i = \alpha_{ji} = 0$.
Notice that $(ji,ji)$ and $(ji,ij) \notin \Phi$ so the strongly connected component (and $\MM_\GG$ itself) must encompass a simple path from $ji$ to itself whose length is at least 3.

Hence, there exists in $\GG$ a circuit that contains the edge $\{i,j\}$. The graph $\GG - \{i,j\}$ (the graph obtained from $\GG$ by the removal of the edge $\{i,j\}$) is still connected.
Let $k \neq i$ be a node of $I$ where $q_k>0$. 
Since $\GG - \{i,j\}$ is connected, it contains a simple path joining $i$ and $k$, of length at least 1. Let $(i,\ldots,h,k)$ be such path. 
Then, the message digraph $\MM_\GG$ contains the path $(ji,\ldots,hk)$ from $ji$ to $hk$, so there exists $hk$ reachable from $ji$ where $\alpha_{hk} = q_k / Q_{kh} > 0$.  
\end{proof}

\begin{figure}\centering
\begin{tikzpicture}[scale = 1]

	\tikzstyle{D node} = [shape = circle, fill=black , minimum size = 5mm, inner sep = 0mm];
	\tikzstyle{G node} =  [shape = circle, draw, thick , minimum size = 8mm, inner sep = 0mm];


	
	\node at (0,4) {\Large$\mathcal{G}$};
	\node[G node] (j) at (1,4) {\Large$j$};
	\node[G node] (i) at (4,4) {\Large$i$};
	\node[G node] (k) at (7,4) {\Large$k$};
	
	\path (j) edge [thick] (i); 
	\path (i) edge [thick] (k); 
	\path (j) edge [thick, dashed, red , <-, out = 20, in = 160] 
	node [above] {$W^{i\to j}(t)$} (i);
	\path (i) edge [thick, dashed, red , <-, out = 20, in = 160] 
	node [above] {$W^{k\to i}(t)$} (k);
	\node at (4.5,3.5) {\color{red}$q_i$};
	\node at (7.5,3.5) {\color{red}$q_k$};
	
	\node at (0,2) {\Large$\mathcal{M}_\mathcal{G}$};
	\node[D node] (ji) at (2.5,2) {\color{white}$ji$};
	\node[D node] (ik) at (5.5,2) {\color{white}$ik$};	
	\path (ji) edge [->] (ik); 
	\node at (2.5,2.6) {\color{red}$\omega_{ji}(t)$};
	\node at (5.5,2.6) {\color{red}$\omega_{ik}(t)$};
	\node at (3,1.5) {\color{red}$\alpha_{ji}$};
	\node at (6,1.5) {\color{red}$\alpha_{ki}$};

\end{tikzpicture}
\caption{\label{M-G-variables} The figure represents the association of the $W$-messages and the vector $\qq$ in $\GG$ with the $\oomega$-variables and the vector $\aalpha$ in $\MM_\GG$. We recall that $\omega_{ij}(t) = W^{i\to j}(t)$ while $\alpha_{ji} = q_i / Q_{ij}$ for every $ji \in \vec E$.}
\end{figure}
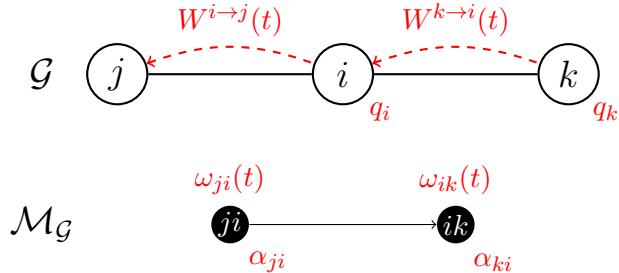

With this lemma about the structure of $\MM_\GG$, we are ready to prove Theorem~\ref{thm:conv-MPA-G} by applying Theorem~\ref{thm:conv-D-general}: this requires to verify that Assumption~\ref{ass:abrs} is satisfied.

\begin{proof}[Proof of Theorem~\ref{thm:conv-MPA-G}]
Consider the message digraph $\MM_\GG = (\vec E , \Phi)$ corresponding to the connected graph $\GG = (I,E)$.
For $t\geq 0$, consider the vector sequences $\aalpha(t),\, \bbeta(t) \in \Rp^{\vec E}$ such that:
\begin{align*}
& \forall ji \in \vec E \quad \alpha_{ji}(t) = q_i/ Q_{ij} \,, \\
& \bbeta(t) =\zeros \,.
\end{align*}
We introduce two vectors $\rr, \ss \in (0,+\infty)^{\vec E}$ such that:
\begin{align*}
	\forall ji \in \vec E \qquad r_{ji} = (C_{ij})^{-1}\,,\quad s_{ji} = C_{ji}\,, 
\end{align*} 
where $C$ is the symmetric matrix in Assumption~\ref{ass:Qspeciale}. 
Since
$s_{ji} = C_{ji} = C_{ij} = r_{ji}^{-1},$ 
the constant vector sequences $\aalpha(t)$, $\bbeta(t)$ and the vectors $\rr$, $\ss$ comply with Assumption~\ref{ass:abrs}. Moreover, we observe that:
$$\frac{Q_{ik}}{Q_{ij}} = \frac{C_{ik}}{C_{ij}} = r_{ji}\, s_{ik}\,, $$
which allows us to rewrite
we rewrite the update law \eqref{eq:MPA-update-1} as: 
$$\Wij(t+1) = \frac{1}{1 + \alpha_{ji} + {\textstyle \sum_{k \in N_i^j }} r_{ji}\, s_{ik} \left(1- \Wki(t) \right) } \,. $$

On the message digraph $\MM_\GG$, consider the dynamic introduced in Subsection~\ref{subsec:conv-MPA-like-D}, for the vector sequences $\oomega(t) \in (0,1]^{\vec E}$ and $\eeta(t) \in [1, +\infty)^{\vec E}$.
Recall the association of the messages $\Wij(t)$ and $\Hij(t)$ to the element $ji \in \vec E$, c.f. Figure~\ref{M-G-variables}. 
 We recognize that, for every $ji \in \vec E$ and $t\geq 0$, it holds:
$$\omega_{ji}(t) \equiv \Wij(t)\, \qquad \eta_{ji}(t) \equiv \Hij(t)\,.$$

According to Assumption~\ref{ass:zero}, $\qq \vecg \zeros$ and  Lemma~\ref{lem:G-gamma-ok-for-D-alpha} guarantees the hypothesis of Theorem~\ref{thm:conv-D-general}.  Then, the messages of the MPA on $\GG$ converge and the sequence $H^\ell(t) $ converges for any node $\ell \in I$.   
\end{proof}

\section{Simulations on random graphs}\label{sec:simulations}

\begin{figure} \centering \fbox{%
	\includegraphics[ 
	trim={25mm} {95mm} {25mm} {80mm}, 
	clip, width={\figwidth}, keepaspectratio=true]{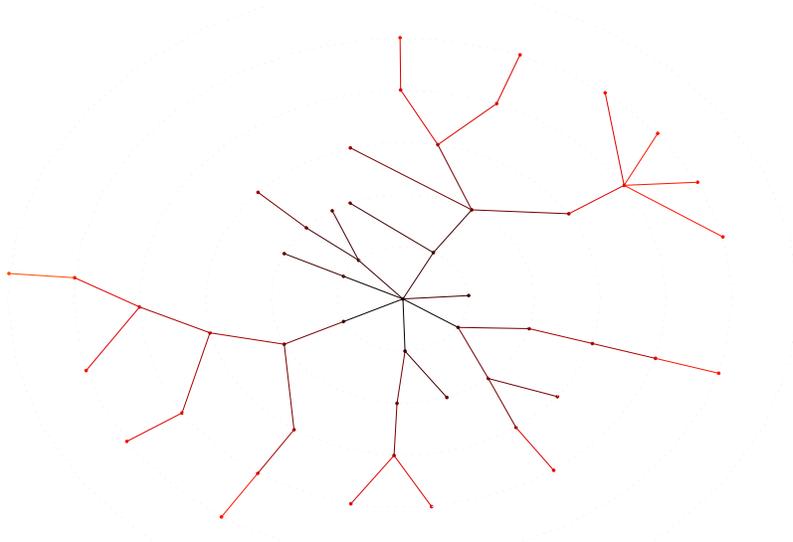}%
	}
	\caption{\label{fig:ST-graph}Radial plot of the spanning tree graph $\GG_{ST} = (I,E_{ST})$ that contains $50$ nodes, $49$ edges, and has diameter 11.}
\end{figure}

\begin{figure}	\centering 	\fbox{%
	\includegraphics[trim={\figtriml} {\figtrimb} {\figtrimr} {\figtrimt}, 	clip, width={\figwidth}, keepaspectratio=true]{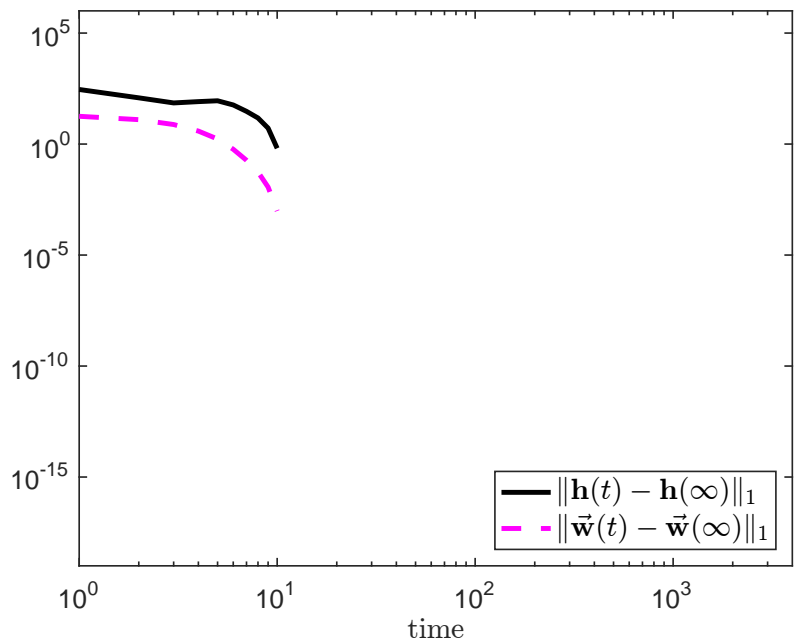}%
	}
	\caption{\label{fig:ST-dyn}The plot shows the MPA convergence on the tree graph $\GG_{ST}$: the solid black line is the error between the vector $\hh(t)$, which contains the node's harmonic influence values, and its limit $\hh(\infty)$; the dashed magenta line is the error between the vector $\vec\ww(t)$, which contains the messages $\Wij(t)$, and its limit $\vec\ww(\infty)$. The algorithm converges in 11 steps.}
\end{figure}

\begin{figure} 	\centering 	\fbox{%
	\includegraphics[trim={\figtriml} {\figtrimb} {\figtrimr} {\figtrimt}, 	clip, width={\figwidth}, keepaspectratio=true]{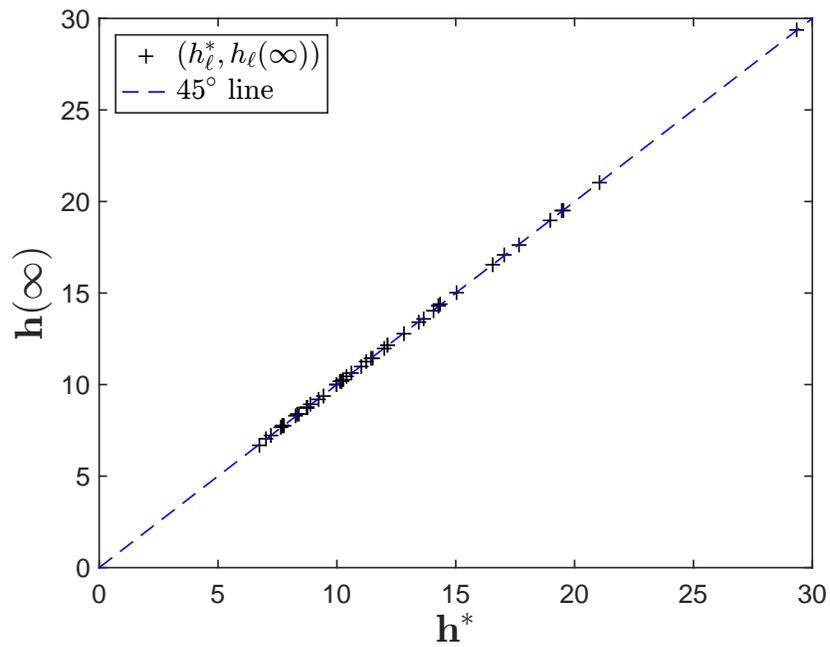}%
	}
	\caption{\label{fig:ST-HH}The elements of the vector $\hh(\infty)$ against the corresponding exact values of the harmonic influence, collected in the vector $\hh^*$, for the graph $\GG_{ST}$. The points are all  aligned on the $45^\circ$ line because the MPA is exact on trees.}
\end{figure}

\begin{figure} 	\centering 	\fbox{%
	\includegraphics[trim={\figtriml} {\figtrimb} {\figtrimr} {\figtrimt}, 	clip, width={\figwidth}, keepaspectratio=true]{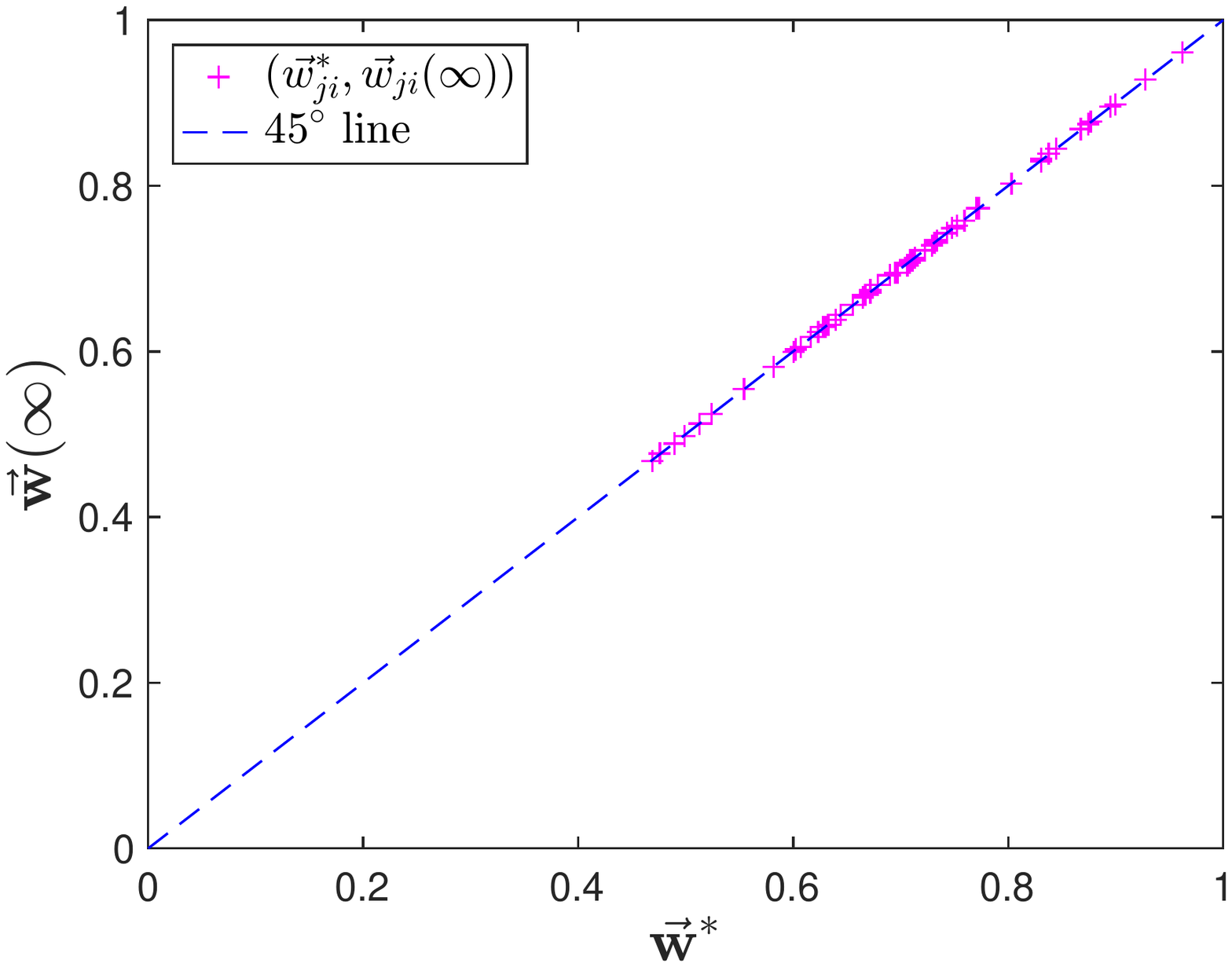}%
	}
	\caption{\label{fig:ST-ww}The elements of the vector $\vec\ww(\infty)$ against the corresponding exact values of the electric interpretation, collected in the vector $\vec\ww^*$, for the graph $\GG_{ST}$. All points are aligned on the $45^\circ$ line because the MPA is exact on trees.}
\end{figure}

Finally, we discuss a few simulations regarding the approximation of the exact \textit{harmonic influence} achieved  by the MPA and its speed of convergence. 

We recall that, given a connected graph $\GG = (I,E)$, the corresponding electrical network is $(\NN, C)$ has node set $I_\bias = I \cup \{\bias\}$, electrical graph $\NN = (I_\bias , E \cup E_\bias)$ and symmetric conductance matrix $C \in \Rp^{I_\bias \times I_\bias}$. In the simulations we assume that the opinion field affects every node, so:
$$ E_\bias = \{ \{i,\bias\}: i\in I \}.$$
The conductance matrix $C$ describes the strength of the influence between agents and by the opinion field. 
For the simulations we assume:
\begin{align*}
	& C_{ij} = 1 \quad \forall \{i,j\}\in E\,, \\
	& C_{i\bias} = \gamma \quad \forall i\in I\,,
\end{align*}
where $\gamma = 0.040$. 

In the simulations we consider three different graphs $\GG$, all with $n = 50$ nodes, but different number of edges. We proceed as follows. 
First we generate an \textit{Erd\H{o}s-R\'enyi} random graph $\GG_{ER} = (I, E_{ER})$ with link probability 0.100. Then, we extract from $\GG_{ER}$ a \textit{spanning tree} $\GG_{ST} = (I, E_{ST})$ and finally we reintroduce \textit{a few} edges to obtain the graph $\GG_{FE} = (I, E_{FE} )$. 

To present the results of the simulations, we introduce the vectors $\hh(t) \in [1,+\infty)^I$ and $\vec{\ww}(t) \in [0,1]^{\vec E}$ and  we use them to stack the approximate harmonic influence $H^\ell(t)$ and  the messages $\Wij(t)$ of the MPA algorithm:
$$ h_\ell(t) = H^\ell(t) \qquad \vec{w}_{ji}(t) = \Wij(t) \,.$$
We run the MPA on each graph and compute the 1-norm errors:
$$\|\hh(t) - \hh(\infty) \|_1\,, \qquad \| \vec\ww(t) - \vec\ww(\infty) \|_1\,, $$
where $ \hh(\infty)$ and $ \vec\ww(\infty)$ denote the convergence values of $\hh(t)$ and $\vec\ww(t)$, respectively. We use these quantities to check the speed of convergence of the MPA.
For reference, we compute the exact harmonic influence, and collect the results in the vector $\hh^* \in [1, +\infty)^I $ such that $ h^*_\ell = H(\ell) $.
Using the electrical interpretation, we also compute the exact value of $\Wij(\infty)$ (i.e. the potential of $i$ in the network $(\NN, C)$ where $j$ is connected to a unitary voltage source). We collect these potentials in the vector $\vec\ww^*$.
To asses the approximation achieved by the MPA, we will compare the asymptotic values $\hh(\infty)$ and $\vec\ww(\infty)$ with the exact values $\hh^*$ and $\vec\ww^*$, and compute the Spearman's rank-order correlation coefficient~\cite{spearman}.

Since the edge sets of the three graphs satisfy  $E_{ST} \subset E_{FE} \subset E_{ER}$, we first discuss the results for the spanning tree $\GG_{ST}$, then for the graph $\GG_{FE}$ and finally for the Erd\H{o}s-R\'enyi graph $\GG_{ER}$. We use the results for the spanning tree to introduce the graphical representation.


The Message Passing Algorithm (MPA) is exact on trees and this is confirmed by the simulations on the tree graph $\GG_{ST}$, visible in Figure~\ref{fig:ST-graph}. The graph $\GG_{ST}$ is a spanning trees of $\GG_{ER}$ (see Figure~\ref{fig:ER-graph}); it has 49 edges and the diameter (i.e. the length of the longest path) is~11. 
The plot in Figure~\ref{fig:ST-dyn} represents the convergence of the MPA. The solid black line is the error between the vector $\hh(t)$ and its limit $\hh(\infty)$; the dashed magenta line is the error between the vector $\vec\ww(t)$ and its limit $\vec\ww(\infty)$. The last point on the plot is at $t=10$: the algorithm converges in 11 steps because the graph $\GG_{ST}$ is a finite tree of diameter 11.   

To discuss the approximation achieved by the MPA, we compare the limit values of the harmonic influence and of the $W$-messages with their corresponding exact values. 
The black crosses in Figure~\ref{fig:ST-HH} represent the pairs:
$$(h^*_\ell, h_\ell(\infty) )\,,$$
where $h^*_\ell = H(\ell)$ is the exact value of the harmonic influence of $\ell \in I$ while $h_\ell(\infty) = H^\ell(\infty)$ is the  approximation computed by the MPA. Since the MPA is exact on trees, all the points are aligned on the $45^\circ$ line. 
Each magenta cross in Figure~\ref{fig:ST-ww} is a pair 
$(\vec w_{ji}^*, \vec w_{ji} (\infty) ) $, where $\vec w_{ji}^*$ is the potential of $i$ if $j$ is connected to a unitary voltage source, while  $\vec w_{ji} (\infty)$ is the limit value taken by the message $\Wij(t)$. 
Again, since the MPA is exact on the tree graph $\GG_{ST}$,  the points are all  aligned on the $45^\circ$.

\begin{figure} \centering \fbox{%
	\includegraphics[ 
	trim={28mm} {90mm} {22mm} {83mm}, 
	clip, width={\figwidth}, keepaspectratio=true]{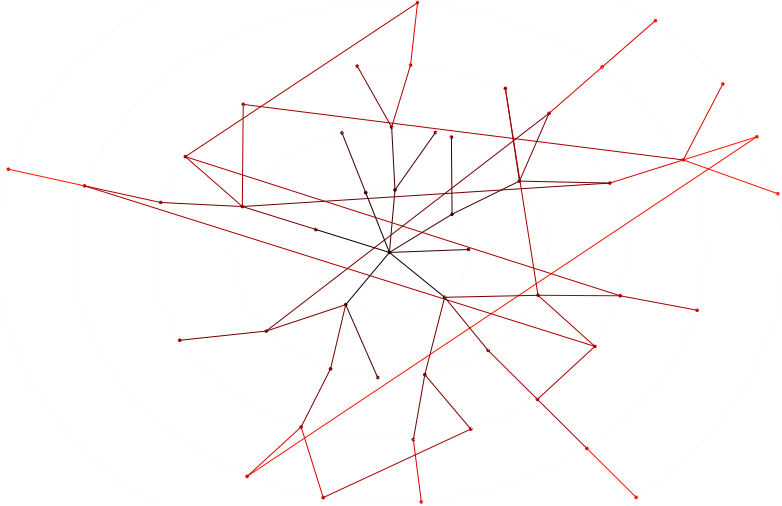}%
	}
	\caption{\label{fig:FE-graph}Radial plot of the graph $\GG_{FE} = (I, E_{FE})$ that contains $50$ nodes, $59$ edges, and has diameter 9.}
\end{figure}

\begin{figure}	\centering 	\fbox{%
	\includegraphics[trim={\figtriml} {\figtrimb} {\figtrimr} {\figtrimt}, 	clip, width={\figwidth}, keepaspectratio=true]{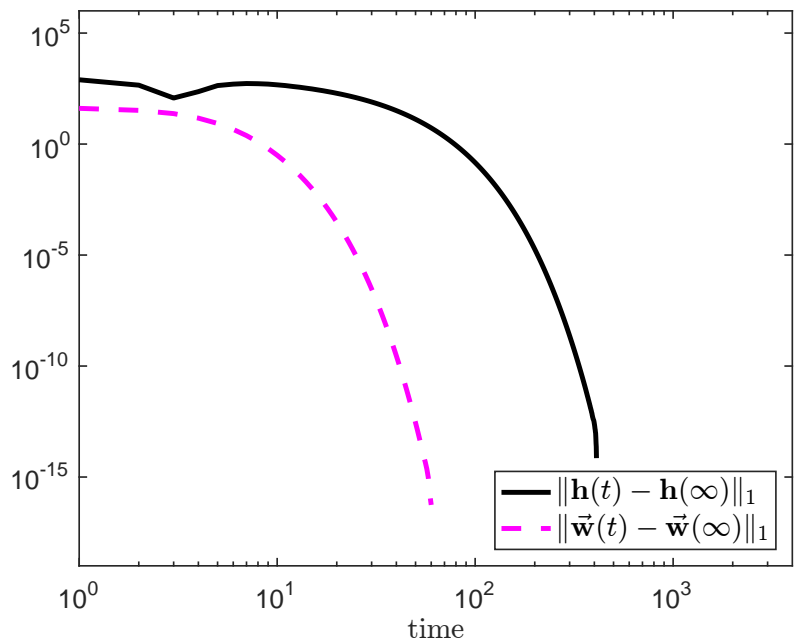}%
	}
	\caption{\label{fig:FE-dyn}
	The plot shows the MPA convergence on the graph $\GG_{FE}$, which contains a limited number of cycles. The solid black line is the error between the vector $\hh(t)$, which contains the node's harmonic influence values, and its limit $\hh(\infty)$; the dashed magenta line is the error between the vector $\vec\ww(t)$, which contains the messages $\Wij(t)$, and its corresponding limit $\vec\ww(\infty)$.}
\end{figure}

\begin{figure} 	\centering 	\fbox{%
	\includegraphics[trim={\figtriml} {\figtrimb} {\figtrimr} {\figtrimt}, 	clip, width={\figwidth}, keepaspectratio=true]{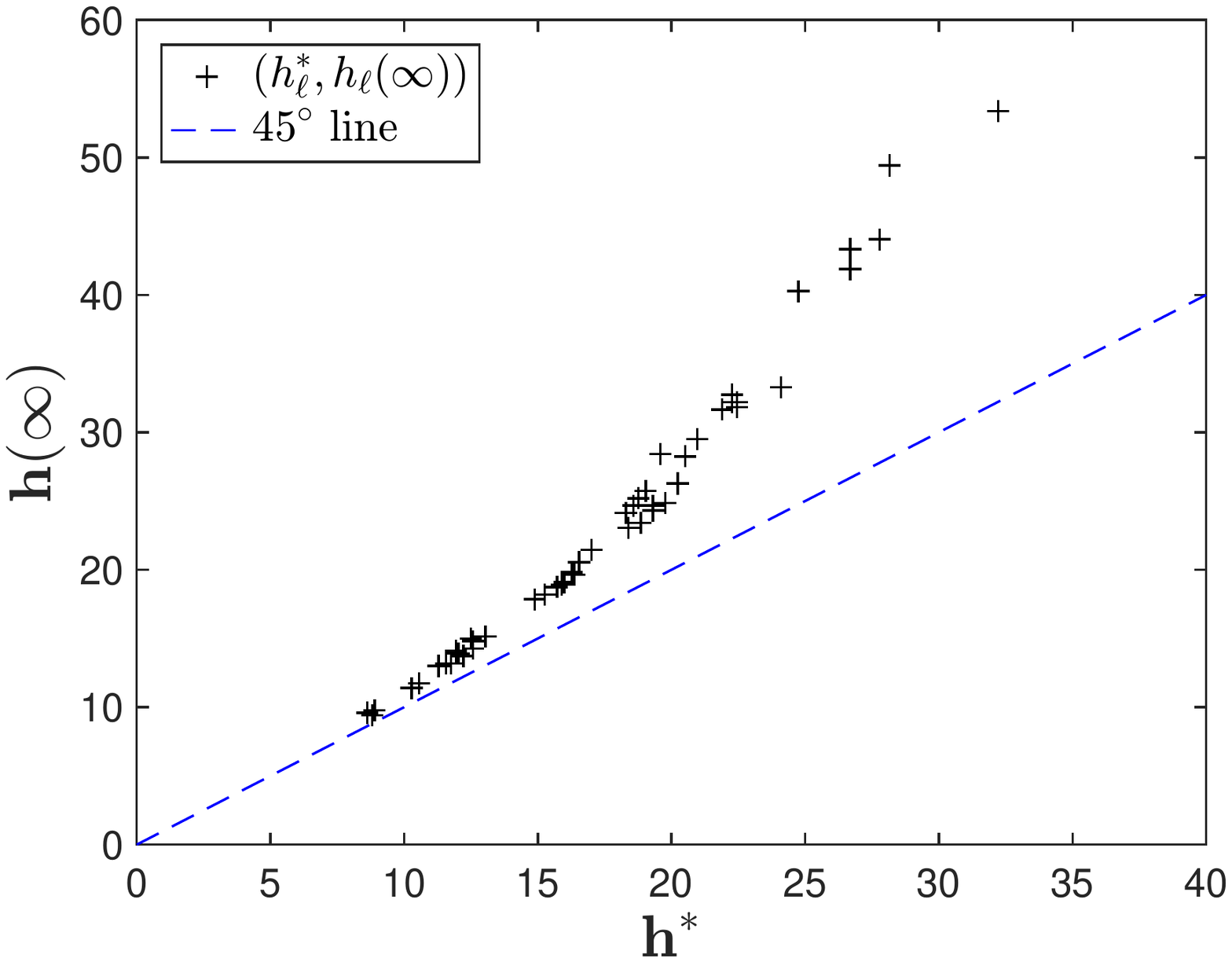}%
	}
	\caption{\label{fig:FE-HH} The elements of the vector $\hh(\infty)$ against the corresponding exact values of the harmonic influence, collected in the vector $\hh^*$, for the graph $\GG_{FE}$. 
All the points are above the $45^\circ$ line. }
\end{figure}

\begin{figure} 	\centering 	\fbox{%
	\includegraphics[trim={\figtriml} {\figtrimb} {\figtrimr} {\figtrimt}, 	clip, width={\figwidth}, keepaspectratio=true]{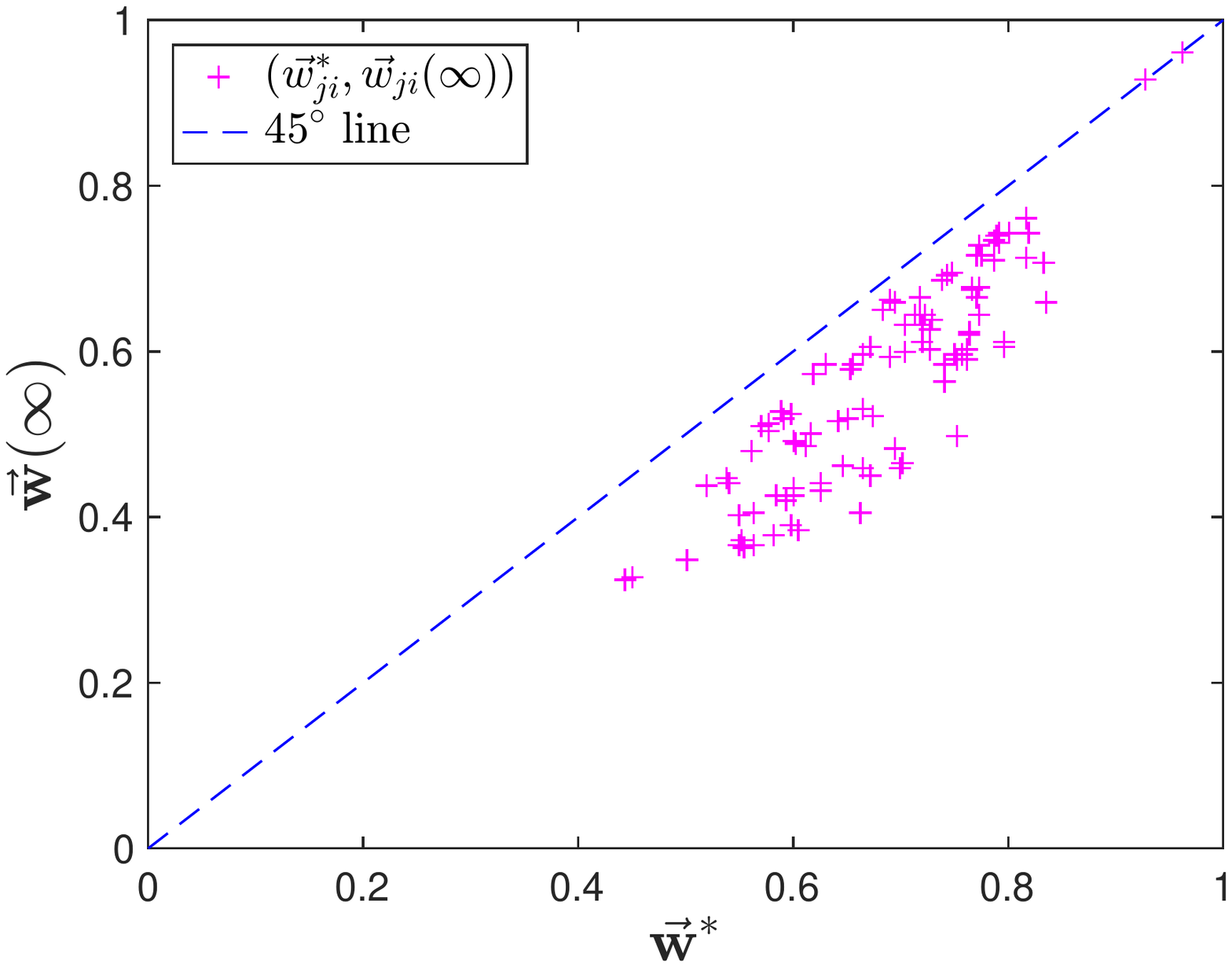}%
	}
	\caption{\label{fig:FE-ww}The elements of the vector $\vec\ww(\infty)$ against the corresponding exact values of the electric interpretation, collected in the vector $\vec\ww^*$, for the graph $\GG_{ST}$.  
	All the points are below or on the $45^\circ$ line. }
\end{figure}

The next simulation regards the graph $\GG_{FE} = (I, E_{FE})$, represented in Figure~\ref{fig:FE-graph}. This graph has 59 edges for 50 nodes: it contains 10  edges more than $\GG_{ST}$, which form a few cycles, and has diameter 9. 
Figure~\ref{fig:FE-dyn} shows the convergence time of the MPA. The distance between $\vec\ww(t)$ and $\vec\ww(\infty)$ (dashed magenta line) becomes negligible after 60 iterations.
The distance between $\hh(t)$ and $\hh(\infty)$ (solid black line) requires about 400 iterations to become negligible. 

The MPA is not exact on the graph $\GG_{FE}$, but the nodes' rankings  implied by the harmonic influence are nearly preserved.
Figure~\ref{fig:FE-HH} represents the limit values of the harmonic influence (the elements of the vector $\hh(\infty)$) against the corresponding exact values (the elements in the vector $\hh^*$). 
All crosses are above the $45^\circ$ line, a behaviours consistently observed throughout simulations. 
If the crosses align to form a strictly monotonically increasing function, the nodes' rankings implied by the harmonic influence are completely preserved. To give a quantitative evaluation of how much the rankings are preserved we use the Spearman's correlation coefficient, that for the two vectors $\hh^*$ and $\hh(\infty)$ of this simulation is 0.9940.
The magenta crosses in Figure~\ref{fig:FE-ww} serve to compare the elements of the vector $\vec\ww(\infty)$ against the corresponding exact values of the electric interpretation, in the vector $\vec\ww^*$.  
All the points are below or on the $45^\circ$ line, and form an elongated cloud.

The last simulation is about the Erd\H{o}s-R\'enyi random graph $\GG_{ER} = (I, E_{ER})$, represented in Figure~\ref{fig:ER-graph}. 
The graph $\GG_{ER} $ contains 123 edges, that form many cycles, and its  diameter is 5.
Figure~\ref{fig:ER-dyn} shows the convergence time of the MPA. The distance between $\vec\ww(t)$ and $\vec\ww(\infty)$ (dashed magenta line) becomes negligible after 30 iterations, about a half of the number of iterations required by the graph $\GG_{FE}$.
Instead, the distance between $\hh(t)$ and $\hh(\infty)$ (solid black line) requires about 2500 iterations to become negligible. 

Also on the graph $\GG_{ER}$ the MPA is not exact, but the nodes' rankings are nearly preserved.
Figure~\ref{fig:ER-HH} represents the elements of the vector $\hh(\infty)$ against the corresponding elements in the vector $\hh^*$. All crosses are above the $45^\circ$ line and nearly aligned in a sort of parabola. The largest value of $h_\ell(\infty)$ is about 5 times bigger than the corresponding $h^*_\ell$. 
The Spearman's coefficient for the two vectors $\hh^*$ and $\hh(\infty)$ of this simulation is 0.9939.
The magenta crosses in Figure~\ref{fig:ER-ww} compare the elements of $\vec\ww(\infty)$ against the corresponding exact values in $\vec\ww^*$. 
All the points are below or on the $45^\circ$ line, further than it was for $\GG_{FE}$.

\begin{figure} \centering \fbox{%
	\includegraphics[ 
	trim={35mm} {100mm} {35mm} {78mm}, 
	clip, width={\figwidth}, keepaspectratio=true]{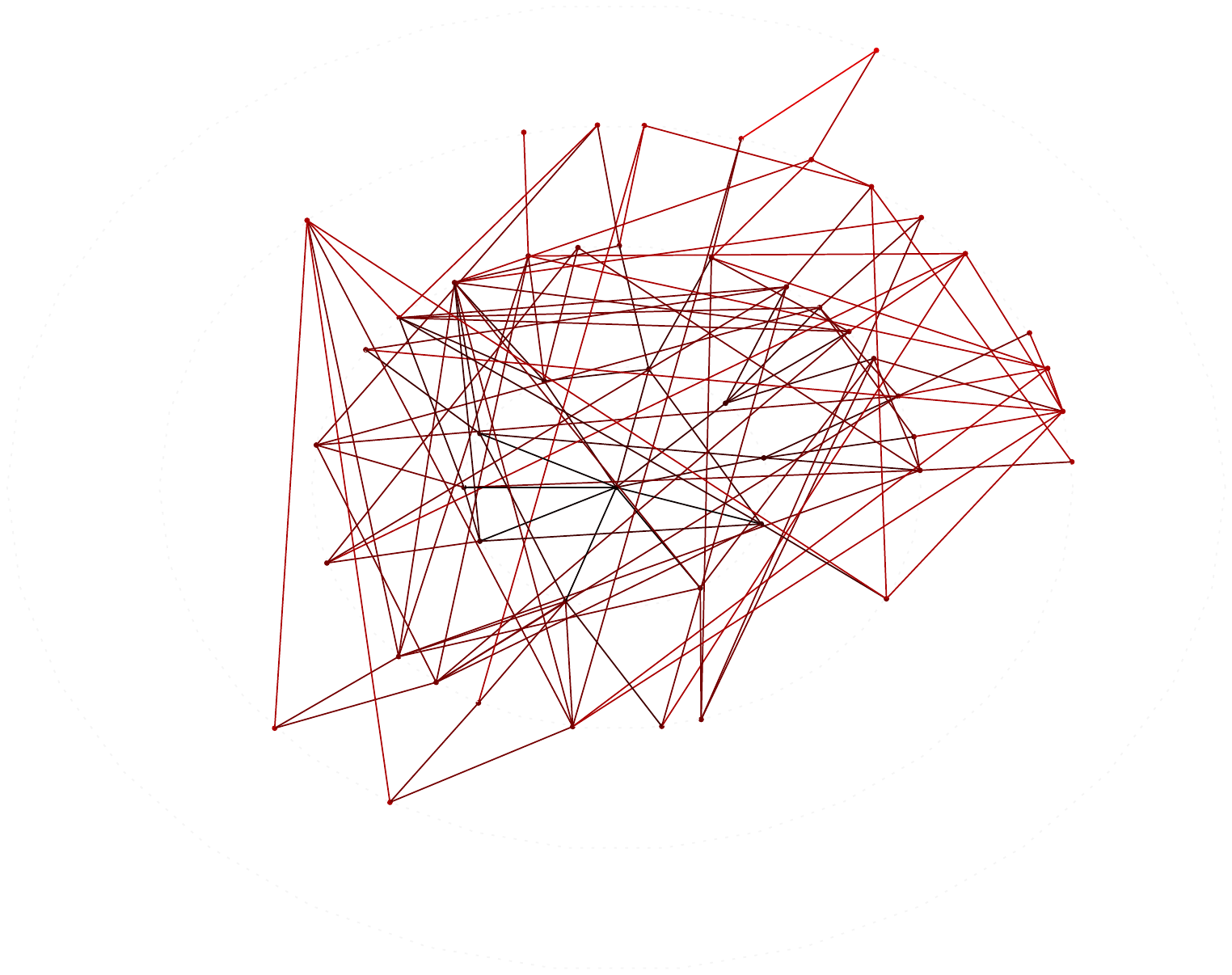}%
	}
	\caption{\label{fig:ER-graph}Radial plot of the Erd\H{o}s-R\'enyi random graph $\GG_{ER} = (I, E_{ER})$ that contains $50$, $123$ edges, and has diameter 5.}
\end{figure}

\begin{figure}	\centering 	\fbox{%
	\includegraphics[trim={\figtriml} {\figtrimb} {\figtrimr} {\figtrimt}, 	clip, width={\figwidth}, keepaspectratio=true]{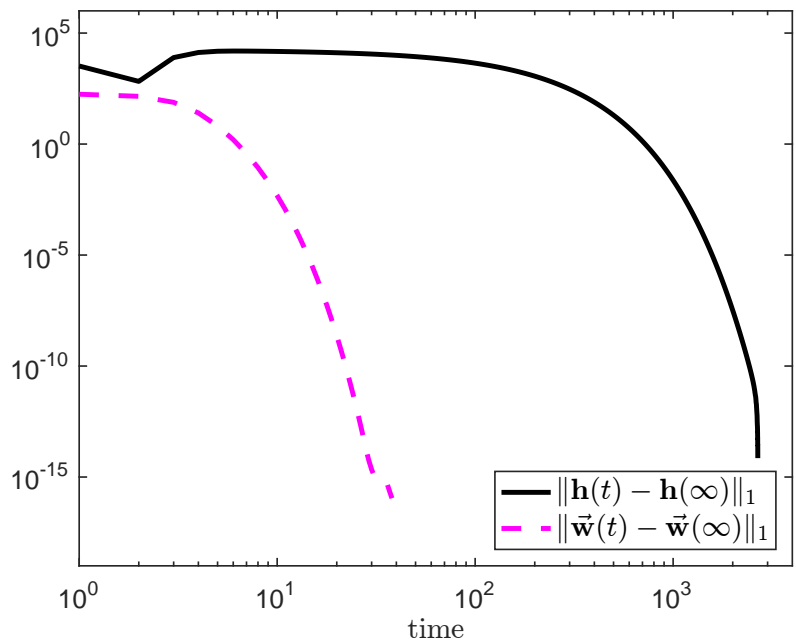}%
	}
	\caption{\label{fig:ER-dyn}The MPA convergence on the Erd\H{o}s-R\'enyi random graph $\GG_{ER} = (I, E_{ER})$.  The solid black line is the error between the vector $\hh(t)$, which contains the node's harmonic influence values, and its limit $\hh(\infty)$; the dashed magenta line is the error between the vector $\vec\ww(t)$, which contains the messages $\Wij(t)$, and its corresponding limit $\vec\ww(\infty)$.}
\end{figure}

\begin{figure} 	\centering 	\fbox{%
	\includegraphics[trim={\figtriml} {\figtrimb} {\figtrimr} {\figtrimt}, 	clip, width={\figwidth}, keepaspectratio=true]{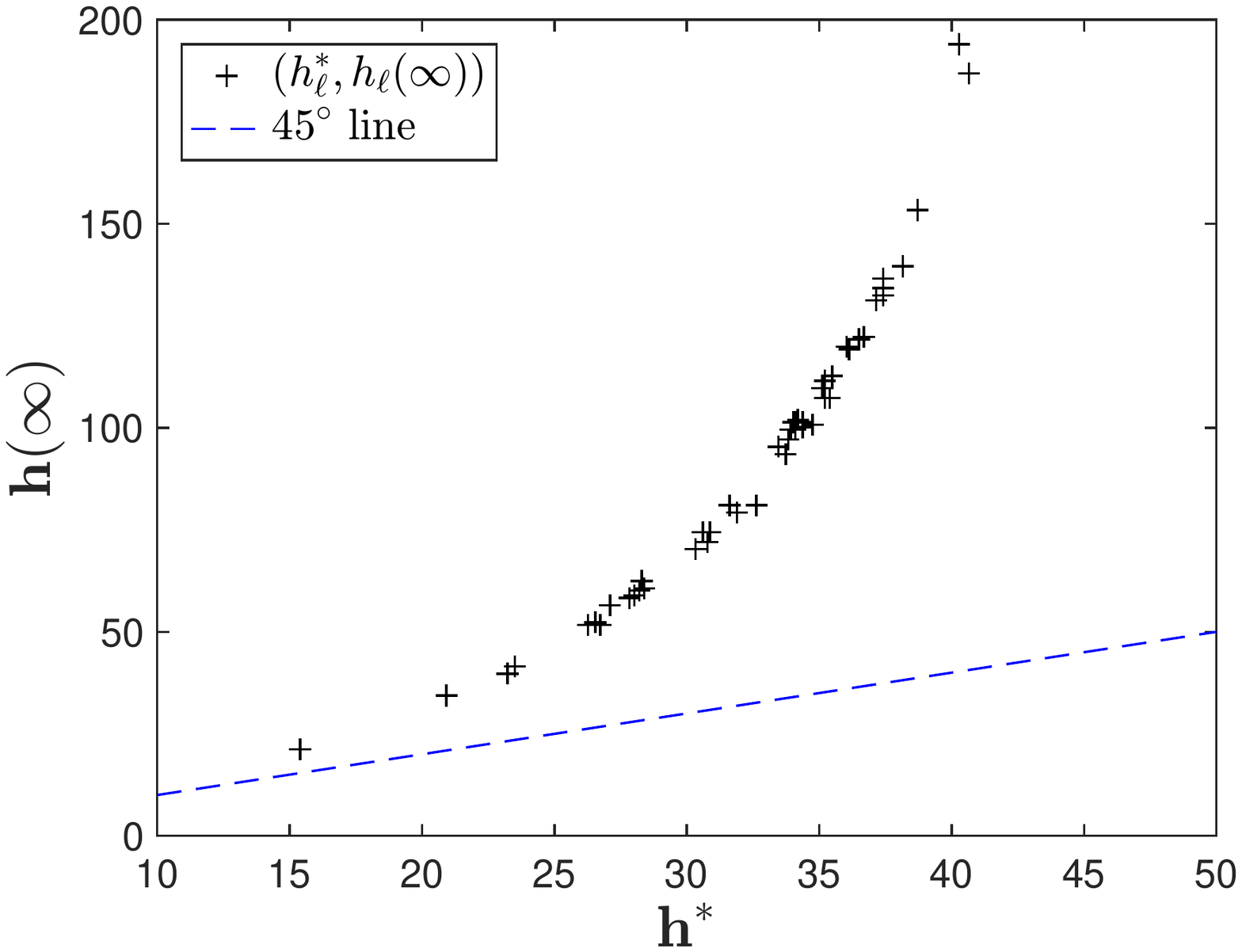}%
	}
	\caption{\label{fig:ER-HH}The elements of the vector $\hh(\infty)$ against the corresponding exact values of the harmonic influence, collected in the vector $\hh^*$, for the graph $\GG_{ER}$.  All the points are above the $45^\circ$ line.  }
\end{figure}

\begin{figure} 	\centering 	\fbox{%
	\includegraphics[trim={\figtriml} {\figtrimb} {\figtrimr} {\figtrimt}, 	clip, width={\figwidth}, keepaspectratio=true]{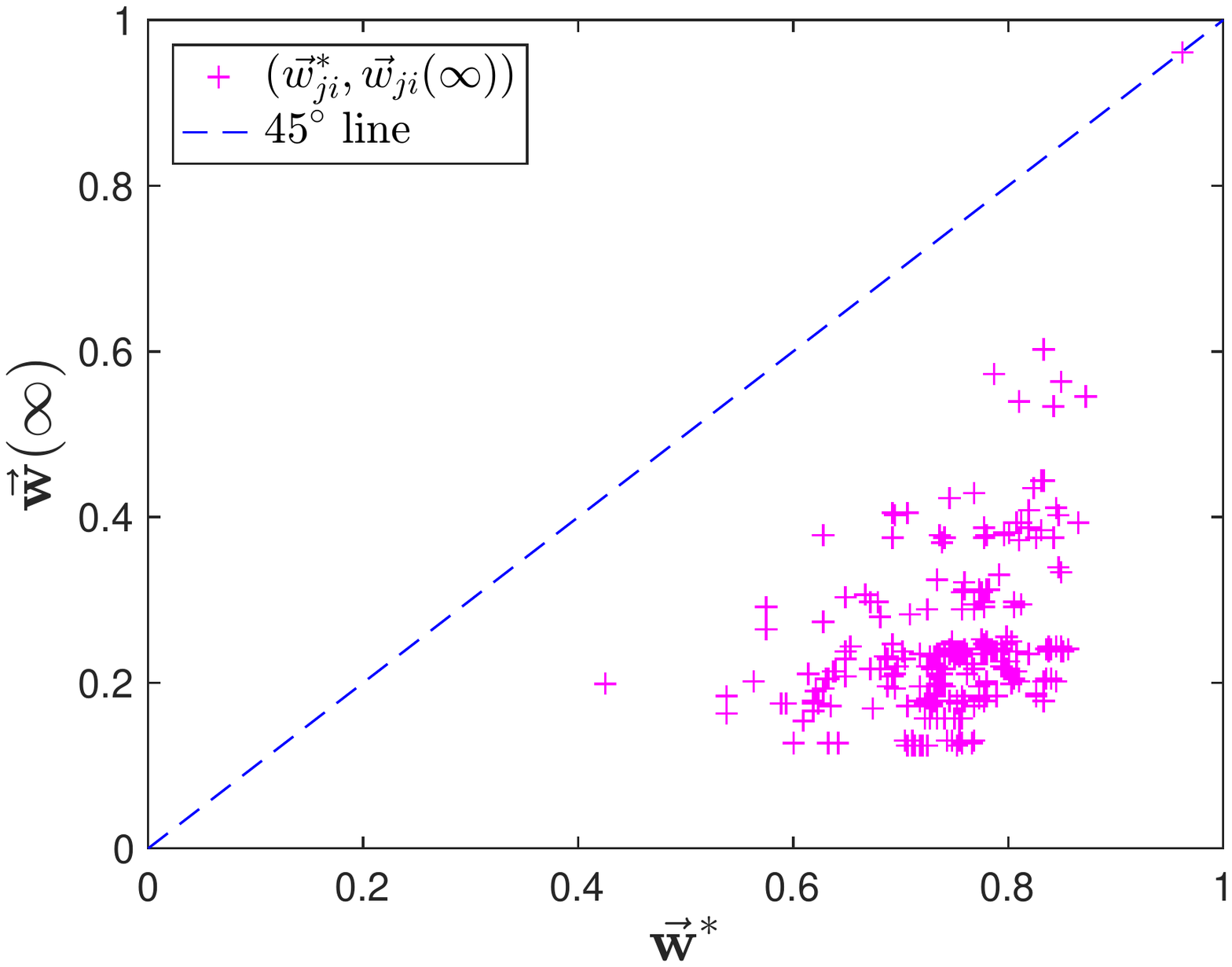}%
	}
	\caption{\label{fig:ER-ww}The elements of the vector $\vec\ww(\infty)$ against the corresponding exact values of the electric interpretation, collected in the vector $\vec\ww^*$, for the graph $\GG_{ER}$. 
	All the points are below or on the $45^\circ$ line. }
\end{figure}

On graphs containing loops, the asymptotic values of the messages $\Wij(t)$, i.e. the limits $\Wij(\infty)$, are smaller (or equal) than the corresponding exact values, computed using the electrical interpretation. 
The limit $\Wij(\infty)$ is exact if the graph is a tree, else represents the value that would be computed by $i$ on its \textit{computation tree} (i.e. the infinite ``unwrapped'' graph obtained exploring the neighborhood of $i$ in a breadth first manner~\cite{Vassio:2014:journal}). From an electrical point of view, this tree contains more paths to the reference nodes than the original graph and this fact makes the MPA compute a smaller resistance from node $i$ to the reference. 
At the same time, in graphs with loops, the MPA approximation of the harmonic influence of the nodes is consistently higher than the exact value. This is again due to the properties of the computation tree, which has more nodes than the original graph. This effect overcomes the fact that the limit messages  $\Wij(\infty)$ are smaller.

\section{Conclusion}\label{sec:conclu}
By extending some recent work in distributed influence maximization~\cite{Vassio:2014:journal}, 
in this paper we studied the harmonic influence of nodes in a opinion dynamic model 
in presence of a constant opinion field and a distributed message passing algorithm to compute it. 
As our main contribution, we proved the convergence of this algorithm on any graph, 
provided the matrix $Q$ satisfies a reversibility condition. Actually, we have observed in simulation that this assumption is not necessary for the algorithm to convergence and produce useful results: thus, future work could focus on finding milder convergence conditions.



\section*{Acknowledgment}
The authors wish to thank H. Zwart (University of Twente) and P. Van Dooren (Catholic University of Louvain) for fruitful discussions about this work.


\end{document}